\newtheorem{thm}{Theorem}[section]
\newtheorem*{thm*}{Theorem} 
\newtheorem{lem}[thm]{Lemma}
\newtheorem{Def}[thm]{Definition}
\newtheorem{prop}[thm]{Proposition}
\newtheorem{cor}[thm]{Corollary}
\newtheorem{notation}[thm]{Notation}
\newtheorem{crit}[thm]{Criterion}
\theoremstyle{definition}{\newtheorem{ex}[thm]{Example}}
\newcommand{\N}{\ensuremath{\mathbb{N}}}
\newcommand{\C}{\ensuremath{\mathbb{C}}}
\newcommand{\Z}{\ensuremath{\mathbb{Z}}}
\newcommand{\Q}{\ensuremath{\mathbb{Q}}}
\newcommand{\del}{\ensuremath{\partial}}
\newcommand{\G}{\ensuremath{\mathcal{G}}}
\newcommand{\Hcal}{\ensuremath{\mathcal{H}}}
\newcommand{\GL}{\operatorname{GL}}
\newcommand{\SL}{\operatorname{SL}}
\newcommand{\Gm}{\mathbb{G}_m}
\newcommand{\Ga}{\mathbb{G}_a}
\newcommand{\Pcal}{\ensuremath{\mathcal{P}}}
\newcommand{\Gal}{\ensuremath{\underline{\mathrm{Gal}}^{\del \del_t}}}
\newcommand{\GalN}{\ensuremath{\underline{\mathrm{Gal}}^{\del \del_{t_0}}}}
\newcommand{\Galf}{\ensuremath{\mathrm{Gal}}}
\newcommand{\Aut}{\operatorname{Aut}}
\newcommand{\Frac}{\ensuremath{\mathrm{Frac}}}
\newcommand{\h}{\operatorname{-}}
\newcommand{\PPV}{\operatorname{PPV}}
\newcommand{\zu}{\hspace{-0.1cm}>\hspace{-0.15cm}}
\title{New classes of parameterized differential Galois groups}
\author{Annette Bachmayr}
\thanks{The author was funded by the Deutsche Forschungsgemeinschaft (DFG) - grant MA6868/1-1.}
\date{\today}
\begin{document}

\begin{abstract} This paper is on the inverse parameterized differential Galois problem. We show that surprisingly many groups do not occur as parameterized differential Galois groups over $K(x)$ even when $K$ is algebraically closed. We then combine the method of patching over fields with a suitable version of Galois descent to prove that certain groups do occur as parameterized differential Galois groups over $k((t))(x)$. This class includes linear differential algebraic groups that are generated by finitely many unipotent elements and also semisimple connected linear algebraic groups. 
\end{abstract}
\maketitle
\textit{2010 Mathematics Subject Classiﬁcation.} 12H05, 20G15, 14H25, 34M50, 34M03, 34M15. \\
\textit{Keywords.} Parameterized Picard-Vessiot theory, Patching, Linear differential algebraic groups, Inverse differential Galois problem, Galois descent. 

\section*{Introduction}
Parameterized differential Galois theory was developed in \cite{CassSin} and it studies the symmetries among solutions of linear differential equations whose coefficients depend on a parameter. More precisely, let $F$ be a field equipped with two commuting derivations $\del$ and $\del_t$ and let $K$ be its field of $\del$-constants, for example $F=\C(t)(x)$ or $F=\C((t))(x)$ with $\del=\del/\del x$, $\del_t=\del / \del_t$ and $K=\C(t)$ or $K=\C((t))$. Note that $K$ is a $\del_t$-differential field. Let $A \in F^{n\times n}$ and consider   the ordinary linear $\del$-differential equation $\del(y)=A\cdot y$ over $F$. A parameterized Picard-Vessiot ring $R$ over $F$ for this equation is an $F$-algebra equipped with extensions of the derivations $\del$ and $\del_t$ such that there exists a fundamental solution matrix $Y \in \GL_n(R)$ (i.e., $\del(Y)=A\cdot Y$) with the property that $R$ is generated by the entries of $Y$, $\det(Y)^{-1}$ and their $\del_t$-derivatives and such that $R$ is a simple $\del$-differential ring with field of $\del$-constants $K$. Parameterized differential Galois theory assigns a parameterized differential Galois group to $R/F$, which can be viewed as a measure of the $\del_t$-algebraic relations among the solutions (i.e., among the entries of $Y$). More precisely, the parameterized Picard-Vessiot group is the group scheme of $\del\del_t$-differential automorphisms of $R/F$. It can be naturally embedded into $\GL_n$ and its image inside $\GL_n$ is defined by $\del_t$-differential algebraic equations over $K$, i.e., it is a linear differential algebraic group over $K$. The inverse problem in parameterized differential Galois theory is the question which linear differential algebraic groups are parameterized Picard-Vessiot groups over $F$. 

If $F=U(x)$ and $\del=\del/\del x$ for some universal differential field $(U,\del_t)$ then a linear differential algebraic group is a parameterized Picard-Vessiot group if and only if it is differentially finitely generated (\cite{Dreyfus}, \cite{MR2975151}). For certain classes of groups, such as linear algebraic groups or linear differential algebraic groups that are unipotent or reductive, there are also complete classifications which groups are differentially finitely generated (\cite{MR2995020}, \cite{MOS}, \cite{MOS2}). 

If $F=K(x)$ with $\del=\del/\del x$ and an arbitrary differential field $(K,\del_t)$ , there is only little known on the inverse parameterized differential Galois problem. Given the known results over $U(x)$, it seems to be natural to believe that every linear differential algebraic group over $K$ that is differentially finitely generated by $K$-rational elements is a parameterized Picard-Vessiot group over $K(x)$. Quite surprisingly, this turns out to be wrong even for subgroups of the multiplicative group $\Gm$ (see Example \ref{ex}.a). Therefore, it seems to be important to study which subgroups of the multiplicative group $\Gm$ and the additive group $\Ga$ are Picard-Vessiot groups over $K(x)$. If $K$ is algebraically closed, we give a classification in Theorem \ref{classification}.

The subsequent parts of the paper treat the case $F=k((t))(x)$ with $\del=\del/ \del x$ and $\del_t=\del/ \del_t$ and $k$ an arbitrary field of characteristic zero. It was shown in \cite{param_LAG} that every connected, semisimple, split linear algebraic group over $k((t))$ is a parameterized Picard-Vessiot group over $F$. The proof relied on a method of patching over fields which was developed by Harbater and Hartmann in \cite{HH}. In this paper, we refine the application of patching to parameterized Picard-Vessiot theory in order to show that linear differential algebraic groups that are generated by finitely many elements with certain properties are parameterized Picard-Vessiot groups over $F$ (Theorem \ref{result}). As a Corollary, we obtain that every linear differential algebraic group over $k((t))$ that is generated by finitely many unipotent $\overline{k((t))}$-rational elements is a parameterized Picard-Vessiot group over $F$ (Theorem \ref{result_unipotent}). In particular, every semisimple (not necessarily split) connected linear algebraic group over $k((t))$ is a parameterized Picard-Vessiot group over $F$ (Corollary \ref{result_semisimple}).

\smallskip

The paper is organized as follows. In Section \ref{sec 1}, we recapitulate parameterized Picard-Vessiot theory and then introduce a Galois descent for parameterized Picard-Vessiot rings (Lemma \ref{lemmainvariantPPVR}). In Section \ref{sec 2}, we classify subgroups of $\Gm$ and $\Ga$ that are parameterized Picard-Vessiot groups over $K(x)$ for $K$ algebraically closed. In Section \ref{section patching}, we recapitulate the method of patching over fields and explain how it can be applied to parameterized Picard-Vessiot theory. As an example, we show how this strategy can be applied to obtain that $\SL_2$ is a parameterized differential Galois group over $k((t))(x)$ (Example \ref{ex SL2}). In Section \ref{section descent}, we combine the method of patching with Galois descent of parameterized Picard-Vessiot rings to obtain a more general statement on patching parameterized Picard-Vessiot rings (Theorem \ref{criterion}). We also translate this theorem into an explicit criterion (Criterion \ref{crit}) that can be applied straight-forward and does not require any knowledge on the method of patching. This criterion states that a given linear differential algebraic group is a parameterized differential Galois group over $k((t))(x)$ if it is generated by finitely many subgroups that are parameterized differential Galois groups over certain overfields. In Section \ref{section results}, we apply this criterion to obtain our results on the inverse parameterized differential Galois problem.   

\smallskip

\textbf{Acknowledgments.} I wish to thank Michael F.\ Singer for helpful discussions concerning subgroups of $\Gm$ that do not occur as parameterized Picard-Vessiot groups over $K(x)$.

\section{Parameterized Picard-Vessiot theory}\label{sec 1}

In this section, we fix some notation and recapitulate parameterized Picard-Vessiot theory. 

All fields are assumed to be of characteristic zero and all rings are assumed to contain $\Q$.
A \textit{$\del\del_t$-ring} $R$ is a ring $R$ with two commuting derivations $\del$ and $\del_t$. Examples of such rings are $\C[t][x]$, $\C[[t]][x]$, $\C[x][[t]]$, $\C(t)(x)$, $\C((t))(x)$, $\C(x)((t))$. A \textit{$\del\del_t$-field} is a $\del\del_t$-ring that is a field. \textit{Homomorphisms of $\del\del_t$-rings} are homomorphisms commuting with the derivations, \textit{$\del\del_t$-ideals} are ideals stable under the derivations and \textit{$\del\del_t$-ring extensions} are ring extensions with compatible $\del\del_t$-structures. Let $(K,\del_t)$ be a differential field. A $\del_t$-$K$-algebra $S$ is a $K$-algebra with an extension $\del_t$ from $K$ to $S$.

Let $R$ be a $\del\del_t$-ring. Then we use the following notation for the corresponding \textit{fields of constants}: $C_R=\{x \in R \mid \del(x)=0\}$ and $R^{\del_t}=\{x \in R \mid \del_t(x)=0\}$. Note that $C_R$ is a $\del_t$-differential ring. A linear differential equation $\del(y)=Ay$ with a matrix $A \in F^{n\times n}$ over a $\del\del_t$-field $F$ is also called a \textit{parameterized (linear) differential equation} to emphasize the extra structure ${\del_t}$ on $F$. A \textit{fundamental solution matrix} for $\del(y)=Ay$ is a matrix $Y \in \GL_n(R)$ for some $\del\del_t$-ring extension $R/F$ such that $\del(Y)=AY$ holds (in other words, the columns of $\del(Y)=AY$ form a fundamental set of solutions of $\del(y)=Ay$). 

\begin{Def}
Let $\del(y)=Ay$ be a parameterized differential equation over a $\del\del_t$-field $F$. A \emph{parameterized Picard-Vessiot extension} for $A$, or \emph{$\PPV$-extension} for short, is a $\del\del_t$-field extension $E$ of $F$ such that 
\begin{enumerate}
 \item There exists a fundamental solution matrix $Y \in \GL_n(E)$ such that ${E=F\<Y\zu_{\del_t}}$ which means that $E$ is generated as a field over $F$ by the entries of $Y$ and all its higher derivatives with respect to ${\del_t}$. 
\item $C_E=C_F$.
\end{enumerate}
\end{Def}
\begin{Def}
A \emph{parameterized Picard-Vessiot ring} for $A$, or \emph{$\PPV$-ring} for short, is a $\del\del_t$-ring extension $R/F$ such that 
\begin{enumerate}
\item There exists a fundamental solution matrix $Y \in \GL_n(R)$ such that ${R=F\{Y,Y^{-1}\}_{\del_t}}$, that is, $R$ is generated as an $F$-algebra by the coordinates of $Y$ and $\det(Y)^{-1}$ and all their higher $\del_t$-derivatives.
\item $C_R=C_F$.
\item $R$ is $\del$-simple, that is, $R$ has no nontrivial $\del$-invariant ideals. 
\end{enumerate}
\end{Def}
A $\PPV$-ring for $A$ always exists if $C_F$ is algebraically closed, see \cite{Wibparam}. Every $\PPV$-extension contains a unique $\PPV$-ring. Indeed, let $E$ be a $\PPV$-extension with fundamental solution matrix $Y \in \GL_n(E)$. Then $R:=F\{Y,Y^{-1}\}_{\del_t}$ is a $\PPV$-ring for $A$. (see \cite{param_LAG} for more details.) 

\begin{Def}
Let $R$ be a $\PPV$-ring over a $\del\del_t$-field $F$ and denote $K=C_F$. Then the \emph{parameterized differential Galois group of $R/F$}, or \emph{$\PPV$-group} for short, is the group functor 
\[ \underline{\Aut}^{\del\del_t}\colon \underline{{\del_t}\h K\h\text{algebras}}\to \underline{\text{Groups}}, \ S \mapsto \operatorname{Aut}^{\del\del_t}(R\otimes_K S / F \otimes _K S), \] where $\operatorname{Aut}^{\del\del_t}(R\otimes_K S / F \otimes _K S)$ denotes the set of $(F\otimes_K S)$-algebra automorphisms of $R\otimes_K S$ that commute with both $\del$ and $\del_t$ (where we extend the derivation $\del$ from $R$ to $R\otimes_K S$ via $\del|_S=0$).
\end{Def}

Let $(K,\del_t)$ be a differential field. A \emph{linear differential algebraic group} or \emph{linear $\del_t$-algebraic group} over $K$ is a group functor $\G \colon  \underline{{\del_t}\h K\h\text{algebras}}\to \underline{\text{Groups}}$ such that there exists an $n \in \N$ and a system $$\{p_\alpha \mid \alpha \in I \}\subseteq K[\del_t^{k}(X_{ij}) \ | \ k \in \N_{\geq 0}, 1 \leq i,j \leq n]$$ of differential polynomials in $n^2$ variables with coefficients in $K$ such that for all $\del_t$-$K$-algebras $S$: ${\G(S)=\{g \in \GL_n(S) \ | \ p_\alpha(g)=0 \text{ for all } \alpha \in I \}}$. If $\G$ is a linear $\del_t$-algebraic group over $K$ and $L/K$ is an extension of $\del_t$-fields, the base change of $\G$ from $K$ to $L$ is denoted by $\G_L$. If $\G_1,\dots,\G_r$ are differential algebraic subgroups of $\G$, then $\overline{\left \langle \G_1,\dots,\G_r \right \rangle}^K$ is defined as the smallest linear differential subgroup $\Hcal \subseteq \G$ containing $\G_1,\dots,\G_r$ (the superscript ``K'' refers to Kolchin closure which is the differential algebraic analog of the Zariski closure). Similarly, if $g \in \G(K)$, then $\overline{\left \langle g \right \rangle}^K$ is defined as the smallest linear differential algebraic subgroup $\Hcal$ of $\G$ such that $g \in \Hcal(K)$. 

\begin{ex}\label{ex UG Ga}
 The additive group $\Ga$ is a linear $\del_t$-algebraic group. We usually work with $\Ga(S)=(S,+)$ although technically speaking we would have to consider it as a subgroup of $\GL_2$, e.g., $\Ga(S)=\{ \begin{pmatrix}
1 & x \\ 0 &1
\end{pmatrix} \mid x \in S\}$. In contrast to the fact that $\Ga$ does not have non-trivial algebraic subgroups, it does have a lot of differential algebraic subgroups: $\Ga^L$ with $\Ga^L(S)=\{ x\in S \mid L(x)=0\}$ is a differential algebraic subgroup of $\Ga$ for every linear differential operator $L \in K[\del_t]$. It is easy to see that $\Ga^L$ is a subgroup of $\Ga^{\tilde L}$ if and only if $L$ is a right-divisor of $\tilde L$. 
If $(K,\del_t)$ is a universal differential field, then every differential algebraic subgroup of $\Ga$ is of this form by \cite[Prop. 11]{Cassidy}.
\end{ex}

\begin{ex}\label{ex Kolchin Abschluss}
 If $g \in \Ga(K)$ with $g \neq 0$, then $$\overline{\left \langle g \right \rangle}^K=\{x \in \Ga \mid g\del_t(x)-\del_t(g)x=0 \}$$ and this group has no non-trivial differential algebraic subgroups. For a proof, set $H=\{x \in \Ga \mid g\del_t(x)-\del_t(g)x=0 \}=\{ x \in \Ga \mid \del_t(x/g)=0 \}$. Note that $H$ is a differential algebraic subgroup of $\Ga$ and $g \in H(K)$. Thus $\overline{\left \langle g \right \rangle}^K \leq H$. On the other hand, there exists a linear differential operator $L \in K[\del_t]$ with $\overline{\left \langle g \right \rangle}^K=\Ga^L$ where we use the notation $\Ga^L$ as in Example \ref{ex UG Ga}. As $H=\Ga^{\tilde L}$ with $\tilde L=g\del_t-\del_t(g)\del_t^0$, we conclude that $L$ divides $\tilde L$ from the right. However, $\tilde L$ is of order one and does not have non-trivial right divisors. Thus $\Ga^{\tilde L}=\Ga^L$ and the claim follows.  
\end{ex}

\begin{thm}\label{thmgalois}
Let $F$ be a $\del\del_t$-field with field of $\del$-constants $K$ and let $A \in F^{n\times n}$. Assume that there exists a $\PPV$-ring $R$ for the parameterized differential equation $\del(y)=Ay$. Then the $\PPV$-group of $R/F$ becomes a linear ${\del_t}$-algebraic group over $K$ via the following natural embedding into $\GL_n$ depending on a fixed fundamental solution matrix $Y \in \GL_n(R)$:
\[\theta_S\colon\operatorname{Aut}^{\del\del_t}(R\otimes_K S / F \otimes _K S)\hookrightarrow \GL_n(S), \ \sigma \mapsto (Y )^{-1}\sigma(Y ).\] 
\end{thm}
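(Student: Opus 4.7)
The plan is to show that for every $\del_t$-$K$-algebra $S$, the map $\theta_S$ is a well-defined injective group homomorphism into $\GL_n(S)$, and then to identify the image functorially as the zero set of a fixed system of $\del_t$-differential polynomial equations over $K$.

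For well-definedness of $\theta_S$: given $\sigma \in \Aut^{\del\del_t}(R \otimes_K S / F \otimes_K S)$, the matrix $\sigma(Y)$ satisfies $\del(\sigma(Y)) = \sigma(AY) = A\sigma(Y)$, since $\sigma$ fixes $A \in F$ and commutes with $\del$ (and $\del|_S = 0$). Hence $\del(Y^{-1}\sigma(Y)) = 0$, so $Y^{-1}\sigma(Y) \in \GL_n(C_{R \otimes_K S})$. The essential input here is the identification $C_{R \otimes_K S} = S$, which follows from $\del$-simplicity of $R$ combined with $C_R = K$ by the standard constants argument in (parameterized) Picard--Vessiot theory (citing \cite{Wibparam} or \cite{param_LAG}). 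That $\theta_S$ is a homomorphism follows from the direct computation $\theta_S(\sigma\tau) = Y^{-1}\sigma(Y \cdot \theta_S(\tau)) = \theta_S(\sigma) \cdot \sigma(\theta_S(\tau)) = \theta_S(\sigma)\theta_S(\tau)$, using that $\sigma$ fixes the $S$-valued matrix $\theta_S(\tau)$. Injectivity is immediate: $\theta_S(\sigma) = I_n$ forces $\sigma(Y) = Y$, and since $\sigma$ commutes with $\del_t$, it fixes every $\del_t$-derivative of the entries of $Y$ and $\det(Y)^{-1}$, which generate $R \otimes_K S$ as a $(F \otimes_K S)$-algebra.

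The main content is showing that the image is cut out by $\del_t$-algebraic equations over $K$ independently of $S$. Following the classical Picard--Vessiot blueprint adapted to the parameterized setting, I would form the $\del\del_t$-ring $R \otimes_F R$ (with $\del$ acting diagonally) and consider $H := (R \otimes_F R)^\del$, a $\del_t$-$K$-algebra. The universal matrix $M := Y^{-1}(1 \otimes Y) \in \GL_n(R \otimes_F R)$ has $\del$-constant entries by the same computation as in Step~1, so $M \in \GL_n(H)$. Using $\del$-simplicity of $R$ and $C_R = K$ one establishes the torsor isomorphism
\[
R \otimes_F R \;\cong\; R \otimes_K H
\]
of $\del\del_t$-rings, from which it follows that $H$ is generated as a $\del_t$-$K$-algebra by the entries of $M$ and $\det(M)^{-1}$. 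Writing $H \cong K\{Z_{ij}, \det(Z)^{-1}\}_{\del_t}/I$ for a radical $\del_t$-ideal $I$, and identifying $\Aut^{\del\del_t}(R \otimes_K S / F \otimes_K S) \cong \Hom_{\del_t\h K\h\mathrm{alg}}(H, S)$ functorially (a $\del\del_t$-automorphism $\sigma$ corresponds to the $\del_t$-$K$-morphism $H \to S$ sending $M_{ij}$ to $\theta_S(\sigma)_{ij}$), the image of $\theta_S$ is exactly the zero set of $I$ in $\GL_n(S)$, which is by definition a linear $\del_t$-algebraic subgroup of $\GL_n$ over $K$.

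The main obstacle is the structural isomorphism $R \otimes_F R \cong R \otimes_K H$, the parameterized analog of Kolchin's torsor theorem. Its proof hinges on a careful interplay between $\del$-simplicity of $R$, the constant-field condition $C_R = K$, and the behaviour of $\del$-constants under tensor products in the presence of the auxiliary derivation $\del_t$. Rather than redo this standard argument in detail, I would cite it from \cite{Wibparam} or \cite{param_LAG}, where this structure result in the parameterized PPV setting is established.
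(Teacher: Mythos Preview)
Your proposal is correct and follows the standard Hopf-algebroid/torsor approach: showing $\theta_S$ is a well-defined injective homomorphism, then realizing the image as $\Hom_{\del_t\text{-}K\text{-alg}}(H,S)$ for $H=(R\otimes_F R)^\del$ via the structural isomorphism $R\otimes_F R\cong R\otimes_K H$. The paper does not spell out any argument but simply cites \cite[Thm.~1.4]{param_LAG}, which is exactly the reference you invoke for the key torsor isomorphism; so your sketch and the paper's citation point to the same proof.
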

\begin{proof}
See \cite[Thm. 1.4]{param_LAG}
\end{proof}

The image of $\theta_S$ is denoted by $\Gal_Y(R/S) \leq \GL_n$ and will also be called the $\PPV$-group of $R/F$. If $\G\leq \GL_n$ is a given linear differential algebraic group and $F$ is a $\del\del_t$-field, we say that \textit{$\G$ is a $\PPV$-group over $F$} if there exists a linear differential equation $\del(y)=Ay$ over $F$ such that there exists a $\PPV$-ring $R/F$ for $\del(y)=Ay$ with $\Gal_Y(R/F)=\G$ for a suitable fundamental solution matrix $Y \in \GL_n(R)$. The following proposition asserts that this does not depend on the fixed representation $\G \hookrightarrow \GL_n$.

\begin{prop}\label{Tannaka}
Let $F$ be $\del\del_t$-differential field and set $K=C_F$. Let $R/F$ be a $\PPV$-ring with $\PPV$-group $G$. Let $\G\leq \GL_n$ be a faithful $K$-representation of $G$ such that for some fundamental solution matrix $Y \in \GL_n(R)$, we have $\Gal_Y(R/F)=\G$. Let $\tilde \G\leq \GL_m$ be another representation of $G$. Then there exists a $\PPV$-ring $\tilde R \subseteq R$ together with a fundamental solution matrix $\tilde Y \in \GL_m(R)$ such that $\Gal_{\tilde Y}(\tilde R/F)=\tilde \G$. Moreover, if $\tilde \G$ is a faithful representation, then $\tilde R =R$. 
\end{prop}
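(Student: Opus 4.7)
The plan is to exploit the Tannakian identification of $G$ with $\G$ to obtain a morphism $\psi\colon\G\to\tilde\G$ of linear $\del_t$-algebraic groups over $K$, and then to construct $\tilde Y$ by transporting $Y$ through $\psi$. First, because $\G\hookrightarrow\GL_n$ is a faithful $K$-representation, I identify $G\cong\G$ as group functors on $\del_t$-$K$-algebras; the second representation of $G$ then becomes a morphism $\psi\colon \G \to \tilde\G \hookrightarrow \GL_m$ of linear $\del_t$-algebraic groups over $K$, explicitly presented by an $m\times m$ matrix of differential polynomials in the coordinates of $\GL_n$ with coefficients in $K$, well-defined modulo the defining differential ideal of $\G$. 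Second, I want to substitute $Y$ into these formulas: this is legitimate because the standard torsor isomorphism $R\otimes_F R \cong R\otimes_K K\{\G\}$ for $\PPV$-rings (see \cite{param_LAG}) realizes $Y$ as a point of $\G(R)$, so all defining differential polynomials of $\G$ vanish at $Y$. I therefore set $\tilde Y := \psi(Y) \in \tilde\G(R) \subseteq \GL_m(R)$ and $\tilde R := F\{\tilde Y,\tilde Y^{-1}\}_{\del_t} \subseteq R$.

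Third, I verify that $\tilde R$ is a $\PPV$-ring over $F$ for a suitable equation. Since $\psi$ is defined over $K = C_F$ and $\del(Y) = AY$, a direct calculation gives $\del\tilde Y = \tilde A\tilde Y$ with $\tilde A := \del(\tilde Y)\tilde Y^{-1}$, and $\tilde A$ is fixed by every $\del\del_t$-automorphism of $R\otimes_K S/F\otimes_K S$ (since $\sigma(\tilde Y) = \tilde Y\cdot g$ for $g=\psi(\theta_S(\sigma))\in \tilde\G(S)$ and $\del g = 0$ because $\del|_S = 0$); hence $\tilde A$ lies in $\mathrm{Mat}_m(R^G) = \mathrm{Mat}_m(F)$. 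The constants and generation axioms of a $\PPV$-ring are immediate ($C_{\tilde R}\subseteq C_R=K\subseteq\tilde R$ and $\tilde R$ is generated by the $\del_t$-derivatives of the entries of $\tilde Y,\tilde Y^{-1}$), so the delicate point is $\del$-simplicity. I will deduce this from the Galois correspondence between closed normal differential algebraic subgroups of $\G$ and intermediate $\PPV$-rings: the subgroup $N := \ker\psi \trianglelefteq \G$ is closed and normal, the invariant ring $R^N$ is by general theory a $\PPV$-ring with Galois group $\G/N \cong \tilde\G$, and $\tilde R = R^N$ because $\tilde R$ is $N$-invariant by construction and both rings are generated over $F$ by the $\del_t$-derivatives of the entries of $\tilde Y,\tilde Y^{-1}$.

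Finally, the identity $\sigma(\tilde Y) = \psi(\sigma(Y)) = \psi(Y\cdot\theta_S(\sigma)) = \tilde Y\cdot\psi(\theta_S(\sigma))$ shows that the embedding $\Aut^{\del\del_t}(\tilde R\otimes_K S/F\otimes_K S)\hookrightarrow\GL_m(S)$ defined via $\tilde Y$ has image exactly $\psi(\G)(S) = \tilde\G(S)$, which gives $\Gal_{\tilde Y}(\tilde R/F) = \tilde\G$. For the last clause, if $\tilde\G$ is also a faithful representation of $G$, then $\psi$ is an isomorphism of linear $\del_t$-algebraic groups over $K$, so $\psi^{-1}$ is again given by differential polynomials over $K$; applying $\psi^{-1}$ to $\tilde Y$ recovers the entries of $Y$ inside $\tilde R$, forcing $R\subseteq\tilde R$ and hence $\tilde R = R$. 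The main obstacle will be the $\del$-simplicity step, i.e.\ the identification $\tilde R = R^N$: it relies on the technical core of parameterized Picard-Vessiot theory (invariants of $\PPV$-rings under closed normal differential algebraic subgroups are again $\PPV$-rings) rather than on the easy Tannakian bookkeeping that drives the rest of the argument.
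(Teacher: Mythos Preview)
The central step of your argument---defining $\tilde Y := \psi(Y)$ by evaluating the morphism $\psi\colon\G\to\GL_m$ at the fundamental solution matrix $Y$---contains a genuine gap. You assert that the torsor isomorphism $R\otimes_F R \cong R\otimes_K K\{\G\}$ ``realizes $Y$ as a point of $\G(R)$'', but this is not what that isomorphism says. It encodes the fact that $Y^{-1}\sigma(Y)\in\G(S)$ for every $\del\del_t$-automorphism $\sigma$ over every $\del_t$-$K$-algebra $S$; equivalently, that $(Y\otimes 1)^{-1}(1\otimes Y)\in\G(R\otimes_F R)$. It does \emph{not} force $Y$ itself to satisfy the defining differential equations of $\G$. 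For a trivial counterexample take $\G=\{1\}\leq\GL_1$, $R=F$, and $Y=2\in F^\times$; for a less trivial one take $\G=\SL_n$: then $\det(Y)$ is functorially invariant and hence lies in $F^\times$, but nothing makes it equal to $1$. Since $\psi$ is only well-defined modulo the defining ideal of $\G$, the expression $\psi(Y)$ is ambiguous when $Y\notin\G(R)$: different choices of representing differential polynomials for $\psi$ yield different matrices, and the subsequent computation $\sigma(\tilde Y)=\tilde Y\cdot\psi(\theta_S(\sigma))$ breaks down.

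The paper's proof avoids this obstruction by working on the Tannakian side: it invokes the equivalence of $\del_t$-tensor categories between $\langle M\rangle_{\otimes,\del_t}$ and $\underline{\mathrm{Rep}}(G)$ from \cite{Gilletetc}, so that the representation $\tilde\G$ corresponds to a differential module $\tilde M\in\langle M\rangle_{\otimes,\del_t}$, which then automatically possesses a fundamental solution matrix with entries in $R$. This never requires $Y\in\G(R)$; the torsor $\Spec(R)$ may be non-trivial. Your strategy could be salvaged if the torsor were known to be trivial (one could then replace $Y$ by $BY$ with $B\in\GL_n(F)$ to arrange $Y\in\G(R)$), but no such hypothesis is available. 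A secondary issue: your identification $\tilde R=R^N$ is asserted rather than proved---you have $\tilde R\subseteq R^N$ since $\tilde Y$ is $N$-invariant, but the reverse inclusion (that every $N$-invariant element already lies in $F\{\tilde Y,\tilde Y^{-1}\}_{\del_t}$) needs its own argument and does not follow from the generators you name.
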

\begin{proof}
Set $A=\del(Y)Y^{-1} \in F^{n\times n}$ and let $M$ be the $\del$-differential module $(F^n,\del_M)$, where $\del_M$ is given by $A$. Then $\Frac(R)$ is a $\PPV$-extension for $M$ (see \cite[Def. 3.27]{Gilletetc} for a definition). Let further $\underline{\operatorname{DMod}}(F,\del)$ denote the category of $\del$-differential modules over $(F,\del)$. It is shown in \cite[Thm 5.1]{Gilletetc} that there is a canonical $\del_t$-differential structure on $\underline{\operatorname{DMod}}(F,\del)$. Let $\underline{\operatorname{Rep}}(G)$ denote the category of finite-dimensional differential algebraic representations of $G$ over $K$. Using Theorem 5.5 together with Theorem 4.27 in \cite{Gilletetc}, there is an equivalence of differential categories \[ \left \langle M \right \rangle _{\otimes, \del_t}\to \underline{\operatorname{Rep}}(G) ,\] where $\left \langle M \right \rangle _{\otimes, \del_t}$ denotes the minimal full
rigid $\del_t$-subcategory of $\underline{\operatorname{DMod}}(F,\del)$ that contains $M$ and is closed under taking subquotients (compare also with the proofs \cite[Lemma 8.2, Prop. 8.6]{Gilletetc}). Any element in $\left \langle M \right \rangle _{\otimes, \del_t}$ has a fundamental solution matrix with entries inside $R$ and the claim follows then similarly as in the non-parameterized case (see for example \cite[Prop. 3.2]{HHM}).
\end{proof}

We conclude this section with a lemma that provides us with a Galois descent for $\PPV$-rings. 
 \begin{lem}\label{lemmainvariantPPVR}
 Let $K/K_0$ be a finite Galois extension of $\del_t$-differential fields with (finite) Galois group $\Gamma$. Let $F_0$ be a $\del\del_t$-differential field with $C_{F_0}=K_0$ and let $F$ be the $\Gamma$-Galois field extension $F=F_0\otimes_{K_0} K$ of $F_0$. Note that $F$ is a $\del\del_t$-field extension of $F_0$ (in a unique way). Let further $L/F$ be an extension of $\del\del_t$-differential fields with $C_{L}=C_F=K$ and such that the action of $\Gamma$ on $F$ over $F_0$ extends to an action on $L$ as $\del\del_t$-differential automorphisms. If $R=F\{Y,Y^{-1}\}_{\del_t}\subseteq L$ is a $\PPV$-ring over $F$ such that $Y \in \GL_n(L)$ is invariant under the action of $\Gamma$, then $R_0:=F_0\{Y,Y^{-1}\}_{\del_t}$ is a $\PPV$-ring over $F_0$ with $\Gal_{Y}(R_0/F_0)_K=\Gal_Y(R/F)$ inside $\GL_n$. In particular, if $\Gal_Y(R/F)=\G_K$ for a linear $\del_t$-algebraic group $\G \leq \GL_n$ defined over $K_0$, then $\Gal_{Y}(R_0/F_0)=\G$.
 \end{lem}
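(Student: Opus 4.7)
The plan is to establish in sequence that $R_0$ is a $\del\del_t$-subring of $L$, that it satisfies the three $\PPV$ axioms, and that its $\PPV$-group base-changes correctly to $\Gal_Y(R/F)$. Since $Y$ is $\Gamma$-invariant and $\Gamma$ acts by $\del\del_t$-automorphisms, the matrix $A:=\del(Y)Y^{-1}\in F^{n\times n}$ is $\Gamma$-fixed, hence lies in $F^\Gamma=F_0$. Thus the equation $\del(y)=Ay$ is already defined over $F_0$, the ring $R_0$ is stable under both $\del$ and $\del_t$, and $R_0\subseteq L^\Gamma$ because the entries of $Y$ and of all its $\del_t$-derivatives are $\Gamma$-fixed while $F_0\subseteq L^\Gamma$.

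I then check the three $\PPV$ axioms for $R_0/F_0$. The generation condition holds by definition. For constants, $C_{R_0}\subseteq C_L\cap L^\Gamma=K^\Gamma=K_0=C_{F_0}$, and the reverse inclusion is trivial. The crucial step is $\del$-simplicity, for which I would first prove the $\del\del_t$-ring isomorphism $R\cong R_0\otimes_{K_0}K$. Surjectivity of the multiplication map $\mu\colon R_0\otimes_{K_0}K\to R$ is immediate from $R=F\cdot R_0=(F_0\otimes_{K_0}K)\cdot R_0$. For injectivity, I fix a $K_0$-basis $(c_i)$ of $K$ and consider any relation $\sum_i r_ic_i=0$ with $r_i\in R_0$; applying every $\gamma\in\Gamma$ (which fixes each $r_i$ since $R_0\subseteq L^\Gamma$) produces a linear system whose coefficient matrix $(\gamma(c_i))_{\gamma,i}$ is invertible by linear independence of characters, forcing $r_i=0$ for all $i$. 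Once $R=R_0\otimes_{K_0}K$ is established, faithful flatness of $K_0\to K$ propagates $\del$-simplicity downward: a nonzero $\del$-ideal $I\subseteq R_0$ would yield a nonzero $\del$-ideal $I\otimes_{K_0}K$ of $R$, which by $\del$-simplicity of $R$ equals $R$, and hence $I=R_0$ by faithful flatness.

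For the Galois group comparison, I use the canonical identifications $R\otimes_K S=R_0\otimes_{K_0}S$ and $F\otimes_K S=F_0\otimes_{K_0}S$, valid for any $\del_t$-$K$-algebra $S$, to identify the two $\del\del_t$-automorphism groups; their images in $\GL_n(S)$ under $\theta_S$ then coincide, and functoriality gives $\Gal_Y(R_0/F_0)_K=\Gal_Y(R/F)$ as $\del_t$-subgroups of $\GL_{n,K}$. The final assertion follows by faithfully flat descent for closed $\del_t$-subgroups of $\GL_{n,K_0}$ along $K_0\to K$: two such subgroups with equal base changes to $K$ must coincide over $K_0$. I expect the main technical obstacle to be the tensor identification $R\cong R_0\otimes_{K_0}K$; the remainder reduces to standard faithful flatness and functoriality arguments.
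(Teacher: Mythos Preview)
Your proposal is correct and follows essentially the same route as the paper: show $A=\del(Y)Y^{-1}\in F_0^{n\times n}$ and $R_0\subseteq L^\Gamma$, establish the isomorphism $R_0\otimes_{K_0}K\cong R$ (the paper phrases your injectivity argument as linear disjointness of $\Frac(R_0)$ and $K$ over $K_0$, which is the same content), and then use this to identify the two automorphism functors over $\del_t$-$K$-algebras. The only minor difference is in the order of the steps for $\del$-simplicity: the paper obtains it immediately from the inclusion $R_0\subseteq L^\Gamma$ together with $C_{L^\Gamma}=K_0$, invoking the fact recorded earlier in the paper that $F_0\{Y,Y^{-1}\}_{\del_t}$ inside a $\PPV$-extension is automatically a $\PPV$-ring, whereas you first prove the tensor identification and then pull $\del$-simplicity back from $R$ by faithful flatness. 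Both arguments are valid; yours is slightly longer but self-contained.
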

\begin{proof}
 First, note that the entries of $A:=\del(Y)Y^{-1}$ are $\Gamma$-invariant, hence $A \in F_0^{n\times n}$. Since $\Gamma$ acts as $\del\del_t$-automorphisms, $L^\Gamma$ is a $\del\del_t$-field  extension of $F_0$ and $R_0\subseteq L^\Gamma$. Also, $C_{L^\Gamma}=C_L^\Gamma=K^\Gamma=K_0=C_{F_0}$, hence $R_0\subseteq L^\Gamma$ is a $\PPV$-ring for $A$ over $F_0$. As $\Frac(R_0)$ and $K$ are linearly disjoint over $K_0$, we obtain an isomorphism of $\del\del_t$-rings $R_0\otimes_{K_0} K \to R$. Hence $R_0\otimes_{K_0}S$ is canonically isomorphic to $R\otimes_K S$ for every $\del_t$-$K$-algebra $S$ and thus $\Aut^{\del\del_t}(R_0\otimes_{K_0} S/F_0\otimes_{K_0} S)$ and $\Aut^{\del\del_t}(R\otimes_K S/F\otimes_K S)$ are canonically isomorphic. Therefore, we obtain an equality $\Gal_{Y}(R_0/F_0)_K(S)=\Gal_Y(R/F)(S)$ inside $\GL_n(S)$ for every $\del_t$-$K$-algebra $S$ and the claim follows.
\end{proof}

\section{Subgroups of $\Gm$ and $\Ga$}\label{sec 2}
If $(K,\del_t)$ is a differential field and $L\in K[\del_t]$ is a linear differential equation, we denote
\[\Gm^{L\circ \bigtriangleup}=\{\lambda \in \Gm \ | \  L\left(\frac{\del_t(\lambda)}{\lambda}\right)=0 \} .\]
Note that $\Gm^{L \circ \bigtriangleup}$ is a linear $\del_t$-algebraic subgroup of the multiplicative group $\Gm=\GL_1$ and that $\Gm^{L \circ \bigtriangleup}$ is a subgroup of $\Gm^{\tilde L \circ \bigtriangleup}$ if and only if $L$ is a right divisor of $\tilde L$. If $K$ is a universal differential field, then Corollary 2 to Proposition 31 in \cite{Cassidy} implies that every linear $\del_t$-algebraic subgroup of $\Gm$ is either finite or of the form $\Gm^{L\circ \bigtriangleup}$ for some $L \in K[\del_t]$. 

Recall that we also defined linear $\del_t$-algebraic subgroups $\Ga^L$ of the additive group $\Ga$ in Example \ref{ex UG Ga}.

\begin{prop}\label{order one}
Let $F$ be a $\del\del_t$-field, $K=C_F$ and $L \in K[\del_t]$ a linear differential equation. 
\begin{enumerate}
 \item If $\Gm^{L\circ \bigtriangleup}$ is a $\PPV$-group over $F$, then there exists an $a \in F^\times$ such that $\Gm^{L\circ \bigtriangleup}$ is the $\PPV$-group of a $\PPV$-ring for the differential equation $\del(y)=ay$ of order one over $F$.
\item If $\Ga^L$ is a $\PPV$-group over $F$, then there exist a $\PPV$-ring over $F$ with $\PPV$-group $\Ga^L$ of the following form: $R=F\{y\}_{\del_t}$ for some $y \in R$ with $\del(y) \in F$. 
\end{enumerate}
\end{prop}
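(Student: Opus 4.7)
My plan is to exploit Proposition~\ref{Tannaka} (the Tannakian change of representation) by using the natural faithful low-dimensional embeddings of $\Gm^{L\circ\bigtriangleup}$ and $\Ga^L$. In both parts, since the target group sits faithfully inside $\GL_1$ or $\GL_2$ in a very restrictive way, a fundamental solution matrix realizing that representation is forced to have a very simple shape, and the desired differential equation together with a small generating set for the $\PPV$-ring can be read off from it.

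For part (a), the inclusion $\Gm^{L\circ\bigtriangleup}\hookrightarrow \GL_1$ is itself a faithful representation. Starting from an arbitrary $\PPV$-ring $R/F$ whose $\PPV$-group is $\Gm^{L\circ\bigtriangleup}$, I apply Proposition~\ref{Tannaka} to this new representation; by the ``moreover'' clause the resulting $\tilde R$ equals $R$, and comes equipped with a fundamental solution matrix $y\in \GL_1(R)=R^\times$ realizing the embedding. Setting $a:=\del(y)/y$, the fact that $y$ is a fundamental solution matrix over $F$ forces $a\in F$, so $\del(y)=ay$ is the desired first-order equation with $\PPV$-ring $R=F\{y,y^{-1}\}_{\del_t}$.

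For part (b), I use the faithful embedding $\Ga^L\hookrightarrow \GL_2$, $x\mapsto \begin{pmatrix}1 & x \\ 0 & 1\end{pmatrix}$, and apply Proposition~\ref{Tannaka} to obtain a fundamental solution matrix $\tilde Y\in \GL_2(R)$ whose $\PPV$-group embedding is this one. Because an upper-unipotent matrix fixes $e_1$ on the right, the first column $v$ of $\tilde Y$ is pointwise invariant under the $\PPV$-group action on $R$ and therefore lies in $F^2$ by the $\PPV$-theoretic Galois correspondence $R^G=F$. Choosing $P\in \GL_2(F)$ with $Pv=e_1$, the matrix $P\tilde Y$ is a fundamental solution matrix for a gauge-equivalent equation $\del(z)=A'z$ over $F$, it generates the same ring $R$, and it realizes the same representation of the $\PPV$-group. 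Writing $P\tilde Y=\begin{pmatrix}1 & y \\ 0 & y_2\end{pmatrix}$, a further invariance check (the group acts only by translating the top entry of the second column) shows $\sigma(y_2)=y_2$, so $y_2\in F^\times$. It follows that $R=F\{y\}_{\del_t}$, and comparing entries in $\del(P\tilde Y)=A'(P\tilde Y)$ forces $A'_{11}=A'_{21}=0$ and yields $\del(y)=A'_{12}\cdot y_2\in F$.

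The step I expect to be most delicate is the invocation of the Galois correspondence $R^G=F$ in part~(b) at the level of the group scheme action (not merely on $K$-points), together with the bookkeeping that left-multiplication by $P\in \GL_2(F)$ genuinely produces a new $\PPV$-datum with the same underlying ring $R$ and the same embedded representation of the $\PPV$-group. Once $\tilde Y$ has been normalized, the remaining computations are short and routine.
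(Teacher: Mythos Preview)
Your proof is correct and follows essentially the same approach as the paper: both parts invoke Proposition~\ref{Tannaka} to pass to the natural low-dimensional faithful representation, and part~(b) then uses the functorial Galois correspondence (cited in the paper as \cite[Prop.~8.5]{Gilletetc}) to show that invariant entries of the fundamental solution matrix lie in $F$. Your normalization in part~(b) is in fact slightly more streamlined than the paper's: you send the first column directly to $e_1$ via one $P\in\GL_2(F)$ and then observe $y_2\in F^\times$, whereas the paper first arranges both entries of the first column to be nonzero, normalizes to $(1,1)^T$, and then uses an additional invariance argument on $y-z$ to reach the final upper-unipotent form.
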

\begin{proof}
Part a) is a direct consequence of Proposition \ref{Tannaka}. 

To prove Part b), let $\G$ be the representation of $\Ga^L$ with 
\[\G(S)=\{\begin{pmatrix}
           1 & \mu \\ 0 & 1 
          \end{pmatrix} \mid \mu \in S \text{ with } L(\mu)=0\}   \]
for all $\del_t$-$K$-algebras $S$. Then by Proposition \ref{Tannaka}, there exists a $\PPV$-ring $R/F$ with fundamental solution matrix $Y \in \GL_2(R)$ such that $\Gal_Y(R/F)=\G$. As $\Gal_{BY}(R/F)=\Gal_Y(R/F)$ for all $B \in \GL_2(F)$, we may assume that the entries $y_{11}$ and $y_{21}$ of $Y$ are both non-zero. For all $\del_t$-$K$-algebras $S$ and all $\sigma \in \Aut^{\del \del_t}(R\otimes_K S/F\otimes_K S)$, there exists a $\mu_\sigma \in S$ with $L(\mu_\sigma)=0$ and 
\[ Y^{-1}\sigma(Y)=\begin{pmatrix}
                    1 & \mu_\sigma \\ 0 & 1
                   \end{pmatrix}
 .\]  Hence
\begin{eqnarray}
  \sigma(y_{11})=y_{11} \text{ and } \sigma(y_{21})=y_{21} \label{eq func inv} \\
\sigma(y_{12})=y_{12}+\mu_\sigma \cdot y_{11} \text{ and } \sigma(y_{22})=y_{22}+\mu_\sigma \cdot y_{21} \label{eq mu}
\end{eqnarray}
for all $\sigma \in \Aut^{\del \del_t}(R\otimes_K S/F\otimes_K S)$. Equation (\ref{eq func inv}) implies that $y_{11}$ and $y_{21}$ are functorially invariant under the action of $\underline{\Aut}^{\del \del_t}(R/F)$, so $y_{11}$ and $y_{21}$ are contained in $F$ (see \cite[Prop. 8.5]{Gilletetc}). After replacing $Y$ with $B\cdot Y$ for the diagonal matrix $B \in \GL_2(F)$ with entries $y_{11}^{-1}$ and $y_{21}^{-1}$ we may assume that $Y$ is of the form 
\[Y=\begin{pmatrix}
     1 & y \\ 1 & z
    \end{pmatrix}\] 
for some elements $y,z \in R$ with $y \neq z$. Note that Equation (\ref{eq mu}) implies that $y-z$ is also functorially invariant and thus contained in $F$. Hence 
\[B'=\begin{pmatrix}
      1 & 0 \\ (y-z)^{-1} & -(y-z)^{-1}
     \end{pmatrix}\] is contained in $\GL_2(F)$ and after replacing $Y$ with $B'Y$, we may assume that 
\[Y=\begin{pmatrix}
     1 & y \\ 0 & 1 
    \end{pmatrix}.\]Then $\del(y) \in F$ since $\del(Y)Y^{-1}$ has entries in $F$, and $R=F\{y\}_{\del_t}$ since $R$ is generated as a $\del_t$-$F$-algebra by the entries of $Y$ and $Y^{-1}$.
\end{proof}
Alternatively, Proposition \ref{order one} can also be proven by using Kolchin's differential cohomology theory (\cite[Chapter VII]{Kolchin}). A classification of $\Ga^L$-torsors is given in \cite[VII.6, Cor. 1]{Kolchin} and the $\Gm^{L\circ \bigtriangleup}$-torsors can be classified using Prop. 8 in \cite[VII.6]{Kolchin} applied to the exact sequence $1 \to \Gm^{L\circ \bigtriangleup} \to \Gm  \xrightarrow{L\circ \bigtriangleup} \Ga \to 1$. We claim that all entries of

\begin{lem} \label{AequivalenzUG}
Let $F$ be a $\del\del_t$-field with $C_F=K$ and let $L \in K[\del_t]$.
\begin{enumerate}
 \item  Let $R=F\{y,y^{-1}\}_{\del_t}$ be a $\PPV$-ring of a differential equation $\del(y)=ay$ of order one over $F$ and set $\G=\Gal_y(R/F) \leq \GL_1$. Then $\G$ is contained in $\Gm^{L\circ \bigtriangleup}$ if and only if $L\left(\frac{\del_t(y)}{y}\right) \in F$.
\item Let $R=F\{y\}_{\del_t}$ be a $\PPV$-ring over $F$ with $\del(y) \in F$ and set $\G=\Gal_Y
(R/F) \leq \Ga$ (in its two-dimensional representation), where $Y=\begin{pmatrix}
                                                                                                1 & y \\ 0 & 1
                                                                                               \end{pmatrix}$. Then $\G$ is contained in $\Ga^{L}$ (in its two-dimensional representation) if and only if $L(y) \in F$.  
\end{enumerate}

\end{lem}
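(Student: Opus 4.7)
The proof of both parts rests on the same strategy: translate the condition ``$\G$ is contained in a certain subgroup'' into a statement about how $\G$ acts on the distinguished generator of $R$, and then recognize functorial $\G$-invariance as membership in $F$ (via \cite[Prop. 8.5]{Gilletetc}, as already used in the proof of Proposition \ref{order one}). Both directions will fall out of a single computation.

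For part (a), I would fix a $\del_t$-$K$-algebra $S$ and $\sigma\in \underline{\Aut}^{\del\del_t}(R/F)(S)$, and write $\lambda_\sigma:=\theta_S(\sigma)=y^{-1}\sigma(y)\in \G(S)\subseteq\Gm(S)$, so that $\sigma(y)=y\lambda_\sigma$ inside $R\otimes_K S$. Since $\sigma$ commutes with $\del_t$, a Leibniz-rule computation yields
\[
\sigma\!\left(\tfrac{\del_t(y)}{y}\right)=\tfrac{\del_t(y)}{y}+\tfrac{\del_t(\lambda_\sigma)}{\lambda_\sigma}.
\]
Applying $L\in K[\del_t]$ (whose coefficients lie in $K=C_F$, so are $\sigma$-invariant) and using $K$-linearity and commutation with $\del_t$, one obtains
\[
\sigma\!\left(L\!\left(\tfrac{\del_t(y)}{y}\right)\right)=L\!\left(\tfrac{\del_t(y)}{y}\right)+L\!\left(\tfrac{\del_t(\lambda_\sigma)}{\lambda_\sigma}\right).
\]
If $\G\subseteq\Gm^{L\circ\bigtriangleup}$, the second summand vanishes for every $S$ and $\sigma$, so $L(\del_t(y)/y)$ is functorially $\G$-invariant and therefore lies in $F$. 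Conversely, if $L(\del_t(y)/y)\in F$, the same identity forces $L(\del_t(\lambda_\sigma)/\lambda_\sigma)=0$ for every $S$ and every $\lambda_\sigma\in\G(S)$, which is precisely the condition $\G(S)\subseteq\Gm^{L\circ\bigtriangleup}(S)$.

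For part (b), the computation is even more direct. With $\mu_\sigma$ defined by $Y^{-1}\sigma(Y)=\bigl(\begin{smallmatrix}1&\mu_\sigma\\0&1\end{smallmatrix}\bigr)\in\G(S)\subseteq\Ga(S)$, we have $\sigma(y)=y+\mu_\sigma$, and by $K$-linearity of $L$
\[
\sigma(L(y))=L(\sigma(y))=L(y)+L(\mu_\sigma).
\]
Exactly the same dichotomy then gives: $\G\subseteq\Ga^L$ iff $L(\mu_\sigma)=0$ functorially iff $L(y)$ is $\G$-invariant iff $L(y)\in F$.

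Since both parts reduce to a one-line computation plus the characterization of $F$ as functorial invariants of $R$, I do not anticipate a serious obstacle; the only point that needs a brief justification is that $L$ commutes with $\sigma$, which uses both that $\sigma$ is a $\del_t$-homomorphism and that the coefficients of $L$ lie in $K=C_F\subseteq F\otimes_K S$ and so are fixed by $\sigma$.
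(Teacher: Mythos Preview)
Your proposal is correct and follows essentially the same approach as the paper's proof: compute $\sigma(L(\del_t(y)/y))=L(\del_t(y)/y)+L(\del_t(\lambda_\sigma)/\lambda_\sigma)$ (resp.\ $\sigma(L(y))=L(y)+L(\mu_\sigma)$), and combine this with the characterization of $F$ as the functorial invariants of $R$ via \cite[Prop.~8.5]{Gilletetc}. The only cosmetic difference is that you split the computation into two steps (first $\sigma(\del_t(y)/y)$, then apply $L$), whereas the paper records the final identity directly.
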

\begin{proof}
a) The element $L\left(\frac{\del_t(y)}{y}\right)\in R$ is containd in $F$ if and only if it is functorially invariant under the action of $\underline{\Aut}^{\del\del_t}(R/F)$ (by \cite[Prop. 8.5]{Gilletetc}). Now $L\left(\frac{\del_t(y)}{y}\right)$ is functorially invariant if and only if $\sigma\left( L\left(\frac{\del_t(y)}{y}\right) \right)=L\left(\frac{\del_t(y)}{y}\right) $ for every $\del_t$-$K$-algebra $S$ and every $\sigma \in \Aut^{\del\del_t}(R\otimes_K S/F\otimes_K S)$. For such a $\sigma$, we denote $\lambda_\sigma=y^{-1}\sigma(y\otimes1) \in \G(S)\subseteq S^\times$. It is now straight-forward to compute that
\[\sigma\left( L\left(\frac{\del_t(y)}{y}\right) \right)=L\left(\frac{\del_t(y)}{y}\right)  +L\left(\frac{\del_t(\lambda_\sigma)}{\lambda_\sigma}\right).\] Therefore, $L\left(\frac{\del_t(y)}{y}\right)$ is functorially invariant if and only if $L\left(\frac{\del_t(\lambda_\sigma)}{\lambda_\sigma}\right)=0$ for all $\del_t$-$K$-algebras $S$ and all $\sigma \in \Aut^{\del\del_t}(R\otimes_K S/F\otimes_K S)$. Since $\G(S)=\{\lambda_\sigma \mid \sigma \in \Aut^{\del\del_t}(R\otimes_K S/F\otimes_K S) \}$, this is equivalent to $\G(S)\subseteq \Gm^{L\circ \bigtriangleup}(S)$ for all $\del_t$-$K$-algebras $S$ which is equivalent to $\G \leq\Gm^{L\circ \bigtriangleup}$.

b) Again, $L(y)$ is contained in $F$ if and only if it is functorially invariant under the action of $\underline{\Aut}^{\del\del_t}(R/F)$. For every $\del_t$-$K$-algebra $S$ and every $\sigma \in \Aut^{\del\del_t}(R\otimes_K S/F\otimes_K S)$, there exists a $\mu_\sigma \in S$ with $\sigma(y )=y  +  \mu_{\sigma}$. Hence $\sigma(L(y ))=L(\sigma(y ))=L(y )+L(\mu_\sigma)$. Therefore, $L(y)$ is functorially invariant if and only if $L(\mu_\sigma)=0$ for all $\sigma$ which is equivalent to $\{\begin{pmatrix}                                                                                                                                                                                                                                                                                                                                                                                                                                                                                                                               1& \mu_\sigma \\ 0 & 1                                                                                                                                                                                                                                                                                                                                                                                                                                                                                                                               \end{pmatrix}
 \mid \sigma \in \Aut^{\del\del_t}(R\otimes_K S/F\otimes_K S) \}\subseteq\Ga^L(S)$ for all $S$ and the claim follows.
\end{proof}

\begin{lem}\label{fundamentalset}
Let $(K,\del_t)$ be a differential field and let $L \in K[\del_t]$ be a linear differential equation of order $n$. 
\begin{enumerate}
 \item There exists a fundamental set of solutions of $L \circ \del_t$ inside $K$ if the following holds: For every linear differential equation $\tilde L \in K[\del_t]$ of order less than $n$, there exists an element $b \in K$ with $L(\del_t(b))=0$ and $\tilde L(\del_t(b))\neq0$.
 \item There exists a fundamental set of solutions of $L$ inside $K$ if the following holds: For every linear differential equation $\tilde L$ in $K[\del_t]$ of order less than $n$, there exists an element $b \in K$ with $L(b)=0$ and $\tilde L(b)\neq0$.
\end{enumerate}

\end{lem}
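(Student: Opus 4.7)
The plan is to show in both parts that the solution space of the given operator inside $K$ has dimension equal to the order of that operator, using the Wronskian to manufacture a witnessing operator $\tilde L$ of too small order whenever it does not.

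For part b), set $V_K=\{v\in K \mid L(v)=0\}$, a $C_K$-vector space of dimension $m\leq n$ (since any $n+1$ solutions of an order-$n$ operator have vanishing Wronskian and are therefore $C_K$-linearly dependent). I claim $m=n$. Suppose for contradiction $m<n$. If $m=0$, take $\tilde L=\del_t^0$, of order $0<n$; the hypothesis gives $b\in K$ with $L(b)=0$ and $b=\tilde L(b)\neq 0$, contradicting $V_K=\{0\}$. If $m\geq 1$, choose a $C_K$-basis $v_1,\dots,v_m$ of $V_K$. By the classical Wronskian criterion, $W(v_1,\dots,v_m)\in K$ is nonzero, so
\[\tilde L(y)=\frac{W(v_1,\dots,v_m,y)}{W(v_1,\dots,v_m)}\]
is a monic operator in $K[\del_t]$ of order $m<n$ with $\tilde L(v_i)=0$ for all $i$, hence by $C_K$-linearity vanishing on all of $V_K$. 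The hypothesis now produces $b\in K$ with $L(b)=0$ (so $b\in V_K$) and $\tilde L(b)\neq 0$, a contradiction. Thus $m=n$ and any $C_K$-basis of $V_K$ is a fundamental set of solutions of $L$ in $K$.

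For part a), $L\circ\del_t$ has order $n+1$, so I must exhibit $n+1$ elements of $V_K':=\{v\in K\mid L(\del_t(v))=0\}$ that are $C_K$-linearly independent. Since $\ker(\del_t|_{V_K'})=C_K\subseteq V_K'$, the derivation $\del_t$ induces a $C_K$-linear isomorphism $V_K'/C_K\xrightarrow{\sim}W_K:=\del_t(V_K')$, giving $\dim_{C_K}V_K'=1+\dim_{C_K}W_K$. Now $W_K$ sits inside the solution space of $L$ in $K$, whose dimension is at most $n$, so it suffices to rule out $\dim_{C_K}W_K<n$. If $\dim_{C_K}W_K=m<n$, I build $\tilde L\in K[\del_t]$ of order $m$ annihilating $W_K$ exactly as in part b) (Wronskian of a $C_K$-basis of $W_K$, or $\tilde L=\del_t^0$ when $m=0$). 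The hypothesis of a) then yields $b\in K$ with $L(\del_t(b))=0$ and $\tilde L(\del_t(b))\neq 0$, contradicting $\del_t(b)\in W_K$.

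The only substantive ingredient is the Wronskian construction together with the classical equivalence between $C_K$-linear independence of a finite subset of $K$ and non-vanishing of its Wronskian; this guarantees simultaneously that $\tilde L$ has the claimed order $m$ and that it annihilates the entire (assumed deficient) solution space. Beyond this and the trivial $m=0$ edge case, both arguments are purely dimensional, and no real obstacle remains.
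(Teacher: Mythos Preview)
Your proof is correct and follows essentially the same Wronskian-based strategy as the paper. The only cosmetic difference is in part a): you decompose the solution space of $L\circ\del_t$ via the short exact sequence $0\to K^{\del_t}\to V_K'\xrightarrow{\del_t} W_K\to 0$ and build $\tilde L$ from a basis of $W_K$, whereas the paper keeps a basis $1,b_2,\dots,b_m$ of $V_K'$ itself, forms the Wronskian operator $\ell(y)=W(1,b_2,\dots,b_m,y)$, and uses $\ell(1)=0$ to factor $\ell=\tilde L\circ\del_t$ with $\tilde L$ of order $m-1$; the two constructions produce the same $\tilde L$. One small notational remark: in this paper $C_K$ is reserved for $\del$-constants, so what you call $C_K$ is written $K^{\del_t}$ here.
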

\begin{proof}
a) Choose a $K^{\del_t}$-basis $b_1,\dots,b_m$ of the solution space of $L \circ \del_t$ inside $K$. We may assume $b_1=1$. We claim that $m=n+1$ under the assumption of a).  Let $\ell \in K[\del_t]$ be the Wronskian $\ell(y)=W(b_1,\dots,b_m,y)$. This is a linear differential equation of order $m\leq n+1$ with solution space spanned by $1,b_2,\dots, b_m$. As $\ell(1)=0$, $\ell=\tilde L \circ \del_t$ for some $\tilde L \in K[\del_t]$ of order $m-1\leq n$. The solution spaces of $L\circ \del_t$ and $\tilde L\circ \del_t=\ell$ inside $K$ thus coincide. By assumptions, this implies that the order of $\tilde L$ equals $n$. We obtain $m-1=n$ and the claim follows. 

The proof of b) is similar.
\end{proof}

Let $K$ be an algebraically closed field (of characteristic zero) with a derivation $\del_t$. We consider the $\del\del_t$-field $K(x)$, where the derivation $\del$ is defined as $\del=\del/\del x$.

We use partial fraction decomposition for elements in $K(x)$. Recall that for $g \in K(x)$, there exist unique elements $g_0 \in K[x]$, $\beta_1,\dots,\beta_r \in K$ and $\gamma_1,\dots,\gamma_r,\gamma_{21},\dots,\gamma_{sr} \in K$ such that 
\[g(x)=g_0(x)+\sum\limits_{i=1}^r\frac{\gamma_i}{x-\beta_i}+\sum\limits_{i=1}^r\frac{\gamma_{2i}}{(x-\beta_i)^2}+\dots+\sum\limits_{i=1}^r\frac{\gamma_{si}}{(x-\beta_i)^s}.\] The term $\sum\limits_{i=1}^r\frac{\gamma_{i}}{x-\beta_i}$ is called the \textit{logarithmic part} of $g$ and $g$ has a $\del$-antiderivative inside $K(x)$ if and only if its logarithmic part is zero.  

\begin{prop}\label{necessary condition}
Let $K$ be an algebraically closed field with a derivation $\del_t$ and let $L \in K[\del_t]$ be a linear differential equation. 
\begin{enumerate}
 \item If $\Gm^{L\circ \bigtriangleup}$ is a $\PPV$-group over $K(x)$, then there exists a fundamental set of solutions of $L\circ\del_t$ inside $K$.  
\item  If $\Ga^L$ is a $\PPV$-group over $K(x)$, then there exists a fundamental set of solutions of $L$ inside $K$. 
\end{enumerate}

\end{prop}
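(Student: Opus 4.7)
The plan is to reduce each part to an order-one equation, translate the PPV-group condition into vanishing of the logarithmic part of a rational function in $K(x)$, and then read off enough $K$-rational solutions from the residues of the coefficient of that order-one equation. For part (a), Proposition \ref{order one}(a) produces a $\PPV$-ring $R = K(x)\{y,y^{-1}\}_{\del_t}$ for some order-one equation $\del(y) = ay$ with $a \in K(x)^\times$ and $\Gal_y(R/K(x)) = \Gm^{L\circ\bigtriangleup}$. Since $\Gm^{L\circ\bigtriangleup} \leq \Gm^{\tilde L\circ\bigtriangleup}$ iff $L$ right-divides $\tilde L$, Lemma \ref{AequivalenzUG}(a) gives $L(\del_t(y)/y) \in K(x)$, whereas $\tilde L(\del_t(y)/y) \notin K(x)$ for every nonzero $\tilde L \in K[\del_t]$ with $\ord(\tilde L) < n := \ord(L)$.

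I now convert $L(\del_t(y)/y) \in K(x)$ into an equation on residues. From $\del(y)/y = a$ and the fact that $\del$ and $\del_t$ commute, $\del(L(\del_t(y)/y)) = L(\del_t(a))$; so $L(\del_t(y)/y) \in K(x)$ is equivalent to $L(\del_t(a))$ admitting a $\del$-antiderivative in $K(x)$ (the reverse direction using $C_R = K$ to absorb the constant of integration), hence to the vanishing of the logarithmic part of $L(\del_t(a))$. A short calculation using $\del_t(x) = 0$ shows that for any $f \in K(x)$ with residue $c \in K$ at a pole $\beta \in K$, the residue of $\del_t(f)$ at $\beta$ equals $\del_t(c)$, since the higher-order poles of $f$ at $\beta$ contribute only to strictly higher-order poles of $\del_t(f)$ at $\beta$. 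Iterating and extending linearly, the residue of $L(f)$ at $\beta$ is $L$ applied to the residue of $f$ at $\beta$. Applied to $f = \del_t(a)$, which has residue $\del_t(\gamma_i)$ at each pole $\beta_i$ of $a$ (with $\gamma_i$ the residue of $a$ at $\beta_i$), the logarithmic part of $L(\del_t(a))$ is $\sum_i L(\del_t(\gamma_i))/(x-\beta_i)$, whose vanishing forces $L(\del_t(\gamma_i)) = 0$ for every $i$. Thus every residue $\gamma_i$ is a $K$-rational solution of $L\circ\del_t$.

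To finish via Lemma \ref{fundamentalset}(a), I must exhibit, for every nonzero $\tilde L \in K[\del_t]$ with $\ord(\tilde L) < n$, some $i$ with $\tilde L(\del_t(\gamma_i)) \neq 0$. If instead $\tilde L(\del_t(\gamma_i)) = 0$ for all $i$, then by the residue formula the logarithmic part of $\tilde L(\del_t(a))$ vanishes, so the chain of equivalences of the second paragraph (run in reverse) yields $\tilde L(\del_t(y)/y) \in K(x)$, and the first paragraph then forces $L$ to right-divide $\tilde L$, contradicting $\ord(\tilde L) < n$. Part (b) is entirely parallel via Proposition \ref{order one}(b) and Lemma \ref{AequivalenzUG}(b): from $R = K(x)\{y\}_{\del_t}$ with $\del(y) = a \in K(x)$ and $\PPV$-group $\Ga^L$, the identity $\del(L(y)) = L(a)$ forces the logarithmic part of $L(a)$ to vanish, giving $L(\gamma_i) = 0$ for every residue $\gamma_i$ of $a$, and Lemma \ref{fundamentalset}(b) applies after the analogous minimality argument. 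The main technical point is the residue-commutes-with-$\del_t$ computation; once that is in hand, the proof is essentially the statement that the minimality of $L$ as an annihilator of $\del_t(y)/y$ (resp.\ of $y$) modulo $K(x)$ forces the residues of $a$ to span a full set of solutions of $L\circ\del_t$ (resp.\ $L$) in $K$.
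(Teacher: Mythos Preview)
Your proof is correct and follows essentially the same route as the paper's: reduce to an order-one equation via Proposition~\ref{order one}, use Lemma~\ref{AequivalenzUG} to translate the equality $\G=\Gm^{L\circ\bigtriangleup}$ (resp.\ $\Ga^L$) into the statement that $L(\del_t(a))$ (resp.\ $L(a)$) has a $\del$-antiderivative in $K(x)$ while no lower-order $\tilde L$ does, read off $L\circ\del_t$ (resp.\ $L$) on the residues of $a$ from the logarithmic part, and conclude with Lemma~\ref{fundamentalset}. Your presentation makes a couple of points more explicit than the paper does---the use of $C_R=K$ in the reverse implication, and the verification that applying $\del_t$ (hence any $\ell\in K[\del_t]$) to a rational function acts on residues by $\del_t$ (hence by $\ell$)---but the argument is the same.
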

\begin{proof}Let $n$ be the order of $L$.

a) By Proposition \ref{order one}.a, there exists a $\PPV$-ring $R=F\{y,y^{-1}\}_{\del_t}$ for a differential equation $\del(y)=ay$ of order one over $K(x)$ such that $\Gal_y(R/F)=\Gm^{L\circ \bigtriangleup}\leq \GL_1$. We set $f=\frac{\del_t(y)}{y} \in R$. For any linear differential equation $\ell \in K[\del_t]$, we compute
\begin{equation*}\label{equation L}
\ell(\del_t(a))=\ell\left(\del_t\left(\frac{\del(y)}{y} \right)\right)=\ell\left(\del\left(\frac{\del_t(y)}{y} \right)\right)=\ell(\del(f))=\del(\ell(f)).
\end{equation*}
Together with Lemma \ref{AequivalenzUG}.a, this implies that there is a $\del$-antiderivative for $L(\del_t(a))$ inside $K(x)$ and that there is no $\del$-antiderivative for $\tilde L(\del_t(a))$ inside $K(x)$ for any linear differential equation $\tilde L \in K[\del_t]$ of order less than $n$ (otherwise, $\Gm^{L\circ \bigtriangleup}\leq \Gm^{\tilde L\circ \bigtriangleup}$, a contradiction).  

Let $\sum_{i=1}^r\frac{\gamma_{i}}{x-\beta_i}$ be the logarithmic part of $a$ (with $\beta_i$, $\gamma_i \in K$). Then the logarithmic part of $\del_t(a)$ equals $\sum_{i=1}^r\frac{\del_t(\gamma_{i})}{x-\beta_i}$. For $\ell \in K[\del_t]$, the logarithmic part of $\ell(\del_t(a))$ equals $\sum_{i=1}^r\frac{\ell(\del_t(\gamma_{i}))}{x-\beta_i}$. Therefore, there exists a $\del$-antiderivative for $\ell(\del_t(a))$ inside $K(x)$ if and only if $\ell(\del_t(\gamma_{i}))=0$ for all $1\leq i \leq r$. We conclude that $L(\del_t(\gamma_{i}))=0$ for all $1\leq i \leq r$ and that for any linear differential equation $\tilde L \in K[\del_t]$ of order less than $n$ there exists an $i \leq r$ with $\tilde L(\del_t(\gamma_{i}))\neq0$. The claim now follows from Lemma \ref{fundamentalset}.a.

b) By Proposition \ref{order one}.b, there exists a $\PPV$-ring $R=F\{y\}_{\del_t}$ with $\del(y) \in K(x)$ and $\Gal_Y(R/F)=\Ga^L\leq \GL_2$, where $Y=\begin{pmatrix} 1 & y \\ 0 & 1 \end{pmatrix}$. Set $a=\del(y)\in K(x)$ and let $\sum_{i=1}^r \frac{\gamma_i}{x-\beta_i}$ be its logarithmic part. Then $\ell(a)$ has logarithmic part $\sum_{i=1}^r \frac{\ell(\gamma_i)}{x-\beta_i}$ for all $\ell \in K[\del_t]$. As $\ell(a)=\del(\ell(y))$, Lemma \ref{AequivalenzUG}.b implies that there is a $\del$-antiderivative for $L(a)$ in $K(x)$ and that there is no $\del$-antiderivative for $\tilde L(y)$ in $K(x)$ for all $\tilde L \in K[\del_t]$ of order less than $n$. We conclude that $L(\gamma_i)=0$ for all $i=1,\dots,r$ and that for all $\tilde L \in K[\del_t]$ of order less than $n$, there exists a $j \leq r$ with $\tilde L(\gamma_j)\neq 0$. The claim now follows from Lemma \ref{fundamentalset}.b.
\end{proof}

\begin{ex}\label{ex}
Let $K=\overline{k((t))}$ be an algebraic closure of a Laurent series field in characteristic zero equipped with the usual $\del_t$-derivation. 
\begin{enumerate}
 \item Consider $L=t\del_t+\del_t^0 \in K[\del_t]$. The solution space of $L\circ \del_t$ inside $K$ is $\overline{k}$, hence there exists no fundamental solution set of $L\circ \del_t$ inside $K$ (this is due to the fact that $K$ does not contain a logarithm of $t$). Proposition \ref{necessary condition}.a now implies that the group 
\[\G=\Gm^{L\circ \bigtriangleup}=\{\lambda \in \Gm \mid \del_t\left(t\frac{\del_t(\lambda)}{\lambda}\right)=0 \} \] does \underline{not} occur as a $\PPV$-group over $K(x)$. In particular, $\G$ does not occur as a $\PPV$-group over $k((t))(x)$, even though $\G$ differentially generated by one $k((t))$-rational element: $\G=\overline{\<t\zu}^K $.
\item Consider $L=\del_t^2+\frac{1}{t}\del_t \in K[\del_t]$. The solution space of $L$ inside $K$ is $\overline{k}$, hence there exists no fundamental solution set of $L$ inside $K$. Proposition \ref{necessary condition}.b implies that the group $\Ga^L$ is not a $\PPV$-group over $K(x)$. However, this is not as surprising as part a) of this example, since $\Ga^L$ is not differentially generated by $K$-rational elements. 
\end{enumerate}
\end{ex}

We proceed by showing that the converse of Proposition \ref{necessary condition} also holds (even if $K$ is not algebraically closed).
\begin{prop}\label{sufficient condition}
Let $(K,\del_t)$ be a differential field and let $L$ be a non-zero linear differential equation in $K[\del_t]$.
\begin{enumerate}
 \item If there exists a fundamental set of solutions of $L\circ\del_t$ inside $K$, then $\Gm^{L\circ \bigtriangleup}$ is a $\PPV$-group over $K(x)$.
\item If there exists a fundamental set of solutions of $L$ inside $K$, then $\Ga^L$ is a $\PPV$-group over $K(x)$.
\end{enumerate} 
\end{prop}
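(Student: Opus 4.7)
The plan is to mirror the proof of Proposition \ref{necessary condition} in reverse: build an explicit order-one equation over $K(x)$ whose $\PPV$-group realizes $\Gm^{L\circ\bigtriangleup}$ (resp. a scalar equation $\del(y)=a$ realizing $\Ga^L$), using the hypothesis to ensure the right logarithmic part structure. For Part a), set $n=\ord(L)$ and choose solutions $1,b_2,\dots,b_{n+1}\in K$ of $L\circ \del_t$ such that $\del_t(b_2),\dots,\del_t(b_{n+1})$ is a $K^{\del_t}$-basis of the solution space of $L$ in $K$ (this is possible by the hypothesis together with the argument from Lemma \ref{fundamentalset}). Pick pairwise distinct $\alpha_2,\dots,\alpha_{n+1}\in\Q\subseteq K$ and put
\[ a\ =\ \sum_{i=2}^{n+1}\frac{b_i}{x-\alpha_i}\ \in\ K(x). \]
I would then study the $\PPV$-group over $K(x)$ of the order-one equation $\del(y)=ay$.

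To handle existence of the $\PPV$-ring and to use Cassidy's classification, I would first work over a universal $\del_t$-field extension $U\supseteq K$, so that a $\PPV$-ring $R$ for $\del(y)=ay$ over $U(x)$ exists. Writing $f=\del_t(y)/y$, the identity $\del(\ell(f))=\ell(\del_t(a))$ (from the proof of Proposition \ref{necessary condition}.a) together with partial fractions gives the two crucial facts: first $L(\del_t(a))=\sum_i L(\del_t(b_i))/(x-\alpha_i)=0$, so $L(f)\in C_R=U\subseteq U(x)$ and Lemma \ref{AequivalenzUG}.a yields $\Gal_y(R/U(x))\le \Gm^{L\circ\bigtriangleup}_U$; second, for every proper right-divisor $\tilde L$ of $L$, the order of $\tilde L$ is strictly less than $n$ so $\tilde L$ cannot vanish on the full $n$-dimensional solution space of $L$, whence some $\tilde L(\del_t(b_i))$ is nonzero and the logarithmic part of $\tilde L(\del_t(a))$ is nonzero, so $\tilde L(f)\notin U(x)$ and therefore $\Gal_y(R/U(x))\not\le \Gm^{\tilde L\circ\bigtriangleup}_U$ by Lemma \ref{AequivalenzUG}.a again. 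Cassidy's classification (Corollary 2 to Proposition 31 of \cite{Cassidy}) says that any $\del_t$-algebraic subgroup of $\Gm$ over $U$ is either finite or of the form $\Gm^{M\circ\bigtriangleup}_U$ for some $M\in U[\del_t]$; the finite case is ruled out by the second fact (taking $\tilde L=\del_t^0$ when $n\ge 1$), and only $M=L$ survives among right-divisors, so $\Gal_y(R/U(x))=\Gm^{L\circ\bigtriangleup}_U$. Since $a\in K(x)$ and the fundamental solution $y$ can be taken to be $\operatorname{Gal}(U/K)$-invariant, passing to a tower of finite Galois layers and applying Lemma \ref{lemmainvariantPPVR} descends the $\PPV$-ring to $K(x)$ with $\PPV$-group $\Gm^{L\circ\bigtriangleup}$.

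Part b) is entirely analogous: with a fundamental set $c_1,\dots,c_n\in K$ of $L$ and distinct $\alpha_1,\dots,\alpha_n\in\Q$, set $a=\sum_{i=1}^n c_i/(x-\alpha_i)\in K(x)$, take a $\PPV$-ring $R$ for $\del(y)=a$ with $Y=\bigl(\begin{smallmatrix}1&y\\0&1\end{smallmatrix}\bigr)$, and use $L(a)=\sum_i L(c_i)/(x-\alpha_i)=0$ to conclude $L(y)\in K$ and hence (by Lemma \ref{AequivalenzUG}.b) $\Gal_Y(R/K(x))\le \Ga^L$; for any proper right-divisor $\tilde L$, some $\tilde L(c_i)\ne 0$, so $\tilde L(y)\notin K(x)$ and the Galois group is not contained in $\Ga^{\tilde L}$. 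Cassidy's classification of $\del_t$-algebraic subgroups of $\Ga$ (Proposition 11 of \cite{Cassidy}) upgrades this to equality over a universal extension, and Galois descent concludes.

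The main obstacle is the last step: turning the two conditions $\G\le \Gm^{L\circ\bigtriangleup}$ (resp. $\G\le \Ga^L$) together with non-inclusion in any subgroup corresponding to a proper right-divisor into an equality. Over a universal $\del_t$-field this is a direct application of Cassidy's classification of $\del_t$-algebraic subgroups of $\Gm$ and $\Ga$, and for general $K$ one has to combine this with the Galois descent provided by Lemma \ref{lemmainvariantPPVR}. The construction of $a$ itself, and the computation of logarithmic parts, are straightforward once the right fundamental set is chosen.
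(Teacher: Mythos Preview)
Your overall strategy—choosing $a$ with prescribed logarithmic part and combining Lemma \ref{AequivalenzUG} with Cassidy's classification—is exactly the paper's, but the descent step hides a real gap. Lemma \ref{lemmainvariantPPVR} applies only to a \emph{finite} Galois extension, whereas a universal $\del_t$-field $U$ is not even algebraic over $K$; ``passing to a tower of finite Galois layers'' therefore does not make sense here, and there is no reason a fundamental solution $y$ of $\del(y)=ay$ inside a $\PPV$-ring over $U(x)$ can be chosen $\Galf(U/K)$-invariant. So you have realized $(\Gm^{L\circ\bigtriangleup})_U$ as a $\PPV$-group over $U(x)$, but this alone does not produce a $\PPV$-ring over $K(x)$.

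The paper sidesteps this entirely by constructing the $\PPV$-ring directly over $K(x)$. With the poles chosen away from $0$ one has $a\in K[[x]]^\times$, so there is $y\in K[[x]]^\times$ with $\del(y)=ay$; since $C_{K((x))}=K$, the ring $R=K(x)\{y,y^{-1}\}_{\del_t}\subseteq K((x))$ is already a $\PPV$-ring over $K(x)$, with no hypothesis on $K$. Cassidy's classification is then used only to analyze the shape of the group $\G=\Gal_y(R/K(x))$, which is already defined over $K$: one allows the auxiliary operator $\tilde L$ to have coefficients in an extension $\tilde K$, shows $\tilde L\circ\del_t$ kills all $b_i$ (using the explicit formula $f=\sum_i\log(x-i)\del_t(b_i)+c$ inside $K((x))$), and concludes that $L$ right-divides $\tilde L$, hence $\Gm^{L\circ\bigtriangleup}\le\G$. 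The same device (antidifferentiating $a$ inside $K[[x]]$) handles part b). Once you replace your passage to $U$ by this embedding into $K((x))$, your argument goes through.
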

\begin{proof}
a) We may assume that $L$ is of degree $n\geq 1$ (otherwise, adjoining $e^x$ yields a $\PPV$-ring with $\PPV$-group the constant group $\Gm^\bigtriangleup$). 
Set $m=n+1$ and choose a $K^{\del_t}$-basis $b_1,\dots,b_m \in K$ of the solution space of $L\circ \del_t$. We define
\[a=\frac{b_1}{x-1}+\frac{b_2}{x-2}+\dots+\frac{b_m}{x-m} \in K(x) \] 
and consider the differential equation $\del(y)=ay$ of order one over $K(x)$. Note that $a \in K[[x]]^{\times}$, hence there exists a $y \in K[[x]]^\times$ with $\del(y)=ay$. Then $K(x)\<y \zu_{\del_t}\subseteq K((x))$ is a $\PPV$-extension for $\del(y)=ay$ (here we use $C_{K((x))}=K$) and thus $R=K(x)\{y,y^{-1}\}_{\del_t}$ is a $\PPV$-ring for $\del(y)=ay$ over $K(x)$. We define $\G=\Gal_y(R/K(x))$ and claim that $\G=\Gm^{L\circ \bigtriangleup}$.

Set $f=\frac{\del_t(y)}{y} \in R$. Inside $K((x))$, we can write $f$ as
\[f=\log(x-1)\del_t(b_1)+\log(x-2)\del_t(b_2)+\dots+\log(x-m)\del_t(b_m)+c \]
for some $c \in K$. Thus
\[L(f)=\sum\limits_{i=1}^m \log(x-i)\cdot L(\del_t (b_i))+L(c)=L(c) \in K \] and Lemma \ref{AequivalenzUG}.a implies $\G \leq \Gm^{L\circ \bigtriangleup}$. 

We assumed that $L$ is of degree $n \geq 1$, hence $m \geq 2$ and thus $\del_t(b_i)\neq 0$ for some $i$. It follows that $f$ is transcendental over $K(x)$ and thus $y$ is transcendental over $K(x)$. 
Therefore, the (non-parameterized) differential Galois group of the (non-parameterized) Picard-Vessiot-ring $K(x)[y,y^{-1}]$ for $\del(y)=ay$ equals $\Gm$. By \cite[Prop. 3.6.(2)]{CassSin}, this implies that $\G$ is Zariski-dense inside $\Gm$.

 By \cite[Cor. 2 to Prop. 31]{Cassidy}, any Zariski-dense, Kolchin-closed subgroup of $\Gm$ is of the form $\Gm^{\tilde L\circ \bigtriangleup} $ for some linear differential equation $\tilde L \in \tilde K[\del_t]$ and some $\del_t$-differential field extension $\tilde K/K$. Let $\tilde L$ be such an equation with $\G=\Gm^{\tilde L\circ \bigtriangleup}$. Lemma \ref{AequivalenzUG}.a implies that $\tilde L(f)=\sum_{i=1}^m \log(x-i)\cdot\tilde L\circ \del_t (b_i)+\tilde L(c)\in \tilde K(x)$ and thus $\tilde L \circ \del_t(b_i)=0$ for all $1 \leq i \leq m$. We conclude that the fundamental solution space of $L\circ \del_t$ is contained in the solution space of $\tilde L \circ \del_t$. Therefore, there exists a linear differential equation $Q \in \tilde K[\del_t]$ with $\tilde L\circ \del_t=Q\circ L \circ \del_t$, so $\tilde L=Q\circ L$ which implies $\Gm^{L\circ \bigtriangleup} \leq \Gm^{\tilde L\circ \bigtriangleup}=\G$ and the claim follows.  

b) Again, we may assume that $L$ is of degree $n \geq 1$ (otherwise, adjoining $\log(x)$ yields a $\PPV$-ring with $\PPV$-group the constant group $\Ga^{\del_t^0}$). By assumptions, there exist a fundamental solution set $b_1,\dots,b_n$ of $L$ in $K$. Define 
\[a=\frac{b_1}{x-1}+\dots+\frac{b_n}{x-n} \in K(x).\] Note that $a \in K[[x]]$, hence there exists a unique element $y \in K[[x]]$ with $\del(y)=a$ and with constant term $1$. Using Lemma \ref{AequivalenzUG}.b, it is easy to check that $R=F\{y\}_{\del_t} \subseteq K((x))$ is a $\PPV$-ring over $K(x)$ with $\PPV$-group $\Ga^L$.
\end{proof}

Recall that in Example \ref{ex}.a, we found a subgroup of $\Gm$ that is $\del_t$-differentially finitely generated by $K$-rational elements, but $G$ is \textit{not} a $\PPV$-group over $K(x)$. 
The following corollary shows that this phenomenon does not occur in the additive case. 

\begin{cor}
Let $(K,\del_t)$ be a differential field and let $G$ be a subgroup of $\Ga$ that is $\del_t$-differentially finitely generated by $K$-rational elements. Then $G$ is a $\PPV$-group over $K(x)$.
\end{cor}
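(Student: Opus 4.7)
The plan is to identify $G$ with a group of the form $\Ga^L$ for some $L \in K[\del_t]$ admitting a fundamental set of solutions in $K$, and then to invoke Proposition \ref{sufficient condition}.b. Let $g_1, \dots, g_r \in K\setminus\{0\}$ be elements with $G = \overline{\langle g_1,\dots,g_r\rangle}^K$. By Example \ref{ex Kolchin Abschluss}, $\overline{\langle g_i\rangle}^K = \Ga^{L_i}$ where $L_i := g_i\del_t - \del_t(g_i) \in K[\del_t]$, so $\Ga^{L_i} \leq G$ for each $i$. I take $L \in K[\del_t]$ to be the monic least common left multiple of $L_1, \dots, L_r$ (which exists because $K[\del_t]$ is a left Euclidean domain). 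Each $L_i$ right-divides $L$, so $g_i \in \Ga^L(K)$ for every $i$, and the minimality of $G$ yields $G \leq \Ga^L$.

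Next I would verify that $L$ has a fundamental set of solutions inside $K$. Let $V \subseteq K$ denote the $K^{\del_t}$-span of $g_1, \dots, g_r$, of dimension $n \leq r$. Applying the Wronskian construction to any $K^{\del_t}$-basis of $V$ produces a monic operator of order $n$ in $K[\del_t]$ with $K$-kernel exactly $V$; this operator is a common left multiple of the $L_i$, so $\ord(L) \leq n$. Conversely, $V \subseteq \ker_K(L)$ forces $\ord(L) \geq n$. Hence $\ord(L) = n$, $\ker_K(L) = V$, and any $K^{\del_t}$-basis of $V$ constitutes a fundamental solution set of $L$ inside $K$. Proposition \ref{sufficient condition}.b then gives that $\Ga^L$ is a $\PPV$-group over $K(x)$.

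It remains to show the reverse inclusion $\Ga^L \leq G$, after which $G = \Ga^L$ yields the corollary. For this I would invoke the standard fact—already implicit in Example \ref{ex Kolchin Abschluss}—that every differential algebraic subgroup of $\Ga$ over $K$ is of the form $\Ga^{L_0}$ for some monic $L_0 \in K[\del_t]$; this follows from the Hopf-algebraic description of subgroup schemes of $\Ga$ together with the fact that $K[\del_t]$ is a principal left ideal domain. Writing $G = \Ga^{L_0}$, the containments $\Ga^{L_i} \leq G$ translate into each $L_i$ right-dividing $L_0$, so $L_0$ is a common left multiple of the $L_i$'s, and the LCLM property of $L$ yields $L\mid L_0$ from the right. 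Hence $\Ga^L \leq \Ga^{L_0} = G$, as desired. The main obstacle is precisely this structural fact about differential subgroups of $\Ga_K$; to bypass it one can descend from a universal $\del_t$-extension $U/K$—where Example \ref{ex UG Ga} applies and yields $G_U = \Ga^{L_0}_U$ directly—and observe that inclusions between $K$-defined subfunctors of $\Ga$ descend from $U$ back to $K$.
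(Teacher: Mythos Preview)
Your argument is correct and lands on the same operator as the paper: the paper simply sets $L=W(a_1,\dots,a_r,y)$, asserts $G=\Ga^L$, and invokes Proposition~\ref{sufficient condition}.b, while your least common left multiple of the $L_i$ coincides (up to a unit) with this Wronskian, as your order comparison shows. Your treatment is in fact more careful than the paper's, since you explicitly justify the inclusion $\Ga^L\le G$ via the subgroup structure of $\Ga$, a step the paper leaves implicit.
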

\begin{proof}
Assume that $G$ is $\del_t$-differentially generated by $a_1,\dots,a_r \in K$. Let $L \in K[\del_t]$ be the Wronskian $\operatorname{W}(a_1,\dots,a_r,y)$. Then $G=\Ga^L$ and the claim follows from Proposition \ref{sufficient condition}.b.  
\end{proof}

We combine Proposition \ref{necessary condition} and Proposition \ref{sufficient condition} and obtain a classification of those subgroups of $\Gm$ that are $\PPV$-groups over $K(x)$.

\begin{thm}\label{classification}
Let $(K,\del_t)$ be an algebraically closed differential field. 
Then for every non-zero linear differential equation $L \in K[\del_t]$, $\Gm^{L\circ \bigtriangleup}$ is a $\PPV$-group over $K(x)$ if and only if there exists a fundamental set of solutions of $L\circ\del_t$ inside $K$ and $\Ga^{L}$ is a $\PPV$-group over $K(x)$ if and only if there exists a fundamental set of solutions of $L$ inside $K$.  
\end{thm}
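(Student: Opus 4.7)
The plan is very short: this theorem is simply the combination of the two halves already proved as Proposition \ref{necessary condition} and Proposition \ref{sufficient condition}. There is no new mathematical content; the theorem exists to collect the equivalences now that both directions have been established. So my proof will just cite these results and unpack how each piece lines up with each direction of each equivalence.

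More precisely, I would handle the multiplicative case first. Suppose $\Gm^{L\circ\bigtriangleup}$ is a $\PPV$-group over $K(x)$. Since $K$ is algebraically closed, Proposition \ref{necessary condition}.a applies and yields a fundamental set of solutions of $L\circ\del_t$ inside $K$. Conversely, if such a fundamental set exists inside $K$, Proposition \ref{sufficient condition}.a (which does not require $K$ to be algebraically closed) constructs an explicit $\PPV$-ring $R=K(x)\{y,y^{-1}\}_{\del_t}$ whose $\PPV$-group is $\Gm^{L\circ\bigtriangleup}$. Then I would treat the additive case in exactly the same way: the forward implication comes from Proposition \ref{necessary condition}.b and the reverse implication from Proposition \ref{sufficient condition}.b.

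There is no real obstacle; the only small subtlety is keeping track of where the hypothesis that $K$ is algebraically closed is actually used. It is needed for the necessary direction, because the argument there invokes partial fraction decomposition over $K$ and the classification of Kolchin-closed subgroups of $\Gm$ via \cite[Cor.\ 2 to Prop.\ 31]{Cassidy}; it is not needed for the sufficient direction, which goes through for any $\del_t$-differential field $K$ once the prescribed fundamental solutions exist. So the statement is in fact asymmetric in character even though it reads as a clean equivalence, and the proof amounts to writing one sentence per direction per group.
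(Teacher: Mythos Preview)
Your proposal is correct and matches the paper's approach exactly: the theorem is stated immediately after Propositions \ref{necessary condition} and \ref{sufficient condition} with no separate proof, the preceding sentence simply reading ``We combine Proposition \ref{necessary condition} and Proposition \ref{sufficient condition}.'' One small inaccuracy in your commentary (not in the proof itself): the appeal to \cite[Cor.\ 2 to Prop.\ 31]{Cassidy} occurs in the \emph{sufficient} direction (Proposition \ref{sufficient condition}.a), not the necessary one; the algebraic closedness hypothesis in Proposition \ref{necessary condition} is used only for partial fraction decomposition.
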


Note that the $\Gm$ and $\Ga$ itself are not $\PPV$-groups over $K(x)$ for any differential field $(K,\del_t)$, since otherwise their base change from $K$ to a universal closure $U$ of $K$ would be a $\PPV$-group over $U(x)$ which cannot be true by \cite{Dreyfus}, since $\Ga$ and $\Gm$ are not differentially finitely generated. 

\section{Patching and parameterized differential Galois theory} \label{section patching}
Patching over fields is a method which was established by Harbater and Hartmann in \cite{HH}. We briefly recall all definitions needed for our purpose.
We fix a field $k$ of characteristic zero, and consider $F=k((t))(x)$ as a $\del\del_t$-differential field via $\del=\del/\del x$ and $\del_t=\del/\del t$. For the Galois descent in the next section we need a change of variables $z=\frac{x}{t}$. Note that $F=k((t))(z)$ with $\del(z)=1/t$ and $\del_t(z)=-z/t$. In particular, $\del_t(z)\neq 0$. We fix pairwise distinct elements $q_1,\dots,q_m \in k$ (which will be specified in Section \ref{section descent}) and consider the closed points $P_1,\dots,P_m$ on the $z$-line $\mathbb{P}^1_k$ defined by $z=q_1,\dots,z=q_m$. We define $2m+1$ overfields $F_U$, $F_{P_1},\dots,F_{P_m}$, $F_{\wp(P_1)}$,\dots,$F_{\wp(P_m)}$ of $F$ that will be used throughout the rest of the paper. 
\begin{eqnarray*}
F&=& k((t))(x)=k((t))(z) \\
F_U&=&\operatorname{Frac}(k[(z-q_1)^{-1},\dots,(z-q_m)^{-1}][[t]])\\
F_{P_i}&=&k((t,z-q_i)):=\operatorname{Frac}(k[[t]][[z-q_i]]) \\
F_{\wp(P_i)}&=&k((z-q_i))((t)).
\end{eqnarray*}
Note that $k[[t]][[z-q_i]]=k[[z-q_i]][[t]]$, hence $F \subseteq F_{P_i} \subseteq F_{\wp(P_i)}$ for each $1 \leq i \leq m$. Also, $F\subseteq F_U$ and $F_U\subseteq k(z)((t))\subseteq F_{\wp(P_i)}$ for each $i$, hence we have a diagram of fields $F \subseteq F_U, F_{P_i} \subseteq F_{\wp(P_i)}$ for each $1\leq i \leq m$. Note that $\del\colon F\to F$ extends canonically to derivations on all these fields compatibly with the inclusions $F \subseteq F_U, F_{P_i} \subseteq F_{\wp(P_i)}$. On $F_{\wp(P_i)}$, this extension is given by 
\[\del \colon k((z-q_i))((t))\to k((z-q_i))((t)), \ \sum \limits_{n=r}^\infty f_n t^n \mapsto \sum \limits_{n=r}^\infty \frac{\del f_n}{\del(z-q_i)} t^{n-1}.  \] 
Moreover, $C_F=k((t))=C_{F_{\wp(P_i)}}$ for all $1 \leq i \leq m$ and in particular $C_{F_{P_i}}=k((t))=C_F$ for $i=1,\dots,m$. In the next section, we will slightly modify the derivation $\del_t \colon F \to F$ to a certain derivation $\del_{t_0}$ and then also define extensions of $\del_{t_0} \colon F \to F$ to all of these fields (see Lemma \ref{defdelN}). In the next theorem, we thus allow arbitrary commuting derivations $\del$ and $\del_t$ on $F$.  

\begin{thm}\label{thmpatching} 
Let $n \in \N$ and set $\Pcal=\{P_1,\dots,P_m\}$. Let $\del$ and $\del_t$ be commuting derivations on $F$. Assume that $\del$ and $\del_t$ extend to derivations on the fields $F_U$, $F_{P_i}$ and $F_{\wp(P_i)}$ such that $\del$ and $\del_t$ commute, such that they are compatible with all inclusions $F_U \subseteq F_{\wp(P_i)}$ and $F_{P_i} \subseteq F_{\wp(P_i)}$ and such that $C_{F_{\wp(P_i)}}=k((t))$ for all $i$. 
For all $P \in \Pcal$, let $R_P/F_P$ be a $\PPV$-ring for a linear differential equation $\del(y)=A_Py$ with $A_P \in F_P^{n\times n}$ such that $R_P \subseteq F_{\wp(P)}$ and let $\G_P \leq \GL_n$ be the $\PPV$-group $\Gal_{Y_P}(R_P/F_{P})$ for a fixed fundamental solution matrix $Y_P \in \GL_n(R_P)$. Then there exists a $\PPV$-ring $R/F$ with fundamental solution matrix $Y \in \GL_n(R)$ such that the $\PPV$-group $\G=\Gal_Y(R/F)\leq \GL_n$ is the Kolchin closure of the group generated by all $\G_P$: $\G=\overline{\left\langle \G_{P_1},\dotsm \G_{P_m} \right\rangle}^K\leq \GL_n$. Moreover, $R\subseteq F_U$.
\end{thm}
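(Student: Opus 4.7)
The plan is to follow the Harbater-Hartmann patching strategy, adapted to parameterized Picard-Vessiot theory as in \cite{param_LAG}. Concretely, I will build a global fundamental solution matrix $Y\in\GL_n(F_U)$ from the local ones $Y_{P_i}\in\GL_n(R_{P_i})\subseteq\GL_n(F_{\wp(P_i)})$ by means of a simultaneous matrix factorization, then check that $Y$ satisfies a linear differential equation defined over $F$, and finally identify the resulting $\PPV$-group with $\overline{\<\G_{P_1},\dots,\G_{P_m}\zu}^K$.

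First, I apply the simultaneous $\GL_n$-factorization from Harbater-Hartmann patching (\cite{HH}) to the tuple $(Y_{P_1},\dots,Y_{P_m})\in\prod_{i=1}^m\GL_n(F_{\wp(P_i)})$, obtaining matrices $Y\in\GL_n(F_U)$ and $B_i\in\GL_n(F_{P_i})$ such that $Y_{P_i}=B_i\cdot Y$ in $\GL_n(F_{\wp(P_i)})$ for every $i$. Using $Y=B_i^{-1}Y_{P_i}$, a direct computation yields
\[\del(Y)Y^{-1}\;=\;-B_i^{-1}\del(B_i)\;+\;B_i^{-1}A_{P_i}B_i\ \in\ F_{P_i}^{n\times n},\]
while $\del(Y)Y^{-1}$ also lies in $F_U^{n\times n}$ by construction. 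The standard intersection property of the patching setup (identifying $F$ with the subset of $F_U\times\prod_i F_{P_i}$ mapping compatibly into $\prod_i F_{\wp(P_i)}$) therefore places $A:=\del(Y)Y^{-1}$ in $F^{n\times n}$.

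Set $R:=F\{Y,Y^{-1}\}_{\del_t}\subseteq F_U$. Since $F_U$ is a field, $R$ is an integral domain and $\Frac(R)\subseteq F_U$, so $C_{\Frac(R)}\subseteq C_{F_U}=k((t))=C_F$; hence $\Frac(R)$ is a $\PPV$-extension of $F$ for $\del(y)=Ay$ and $R$ is the associated $\PPV$-ring, as recalled in Section~\ref{sec 1}. For the Galois group, write $\G:=\Gal_Y(R/F)$ and $\tilde\G:=\overline{\<\G_{P_1},\dots,\G_{P_m}\zu}^K$. Given a $\del_t$-$K$-algebra $S$ and $\sigma\in\G_{P_i}(S)$ with representing matrix $C:=Y_{P_i}^{-1}\sigma(Y_{P_i})$, the identity $\sigma(B_i)=B_i$ together with $Y=B_i^{-1}Y_{P_i}$ yields $\sigma(Y)=YC$; hence $\sigma$ restricts to a $\del\del_t$-automorphism of $R\otimes_K S$ whose matrix via $Y$ is again $C$. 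This embeds each $\G_{P_i}$ into $\G\leq\GL_n$, giving $\tilde\G\leq\G$. For the reverse inclusion, I invoke the Tannakian correspondence of Proposition~\ref{Tannaka}: a closed differential algebraic subgroup $\tilde\G\leq\G$ is cut out by stabilizers of lines in suitable $\del_t$-representations of $\G$, and any such line corresponds via Proposition~\ref{Tannaka} to an element $v$ of $R$ functorially invariant under each $\G_{P_i}$; by \cite[Prop.~8.5]{Gilletetc} this forces $v\in F_{P_i}$ for every $i$, and combined with $v\in F_U$ and the patching intersection property, $v\in F$, so $\G$ also stabilizes the line and $\G\leq\tilde\G$.

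The main obstacle is the inclusion $\G\leq\tilde\G$: it relies on the Chevalley-type description of closed linear differential algebraic subgroups of $\GL_n$ as simultaneous stabilizers of lines in suitable $\del_t$-representations, together with the Tannakian passage (Proposition~\ref{Tannaka}) between representations of $\G$ and sub-constructions of the differential module $(F^n,A)$, so that the patching intersection property can be applied pointwise to invariants. The factorization and globalization of $A$ are essentially mechanical applications of the Harbater-Hartmann framework, and the $\PPV$-ring property follows formally once $R$ is realized as a $\del\del_t$-subring of the field $F_U$ with the correct field of $\del$-constants.
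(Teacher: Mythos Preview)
Your overall strategy—simultaneous factorization of the local $Y_{P_i}$ via Harbater–Hartmann patching, globalizing $A=\del(Y)Y^{-1}$ through the intersection property, realizing $R\subseteq F_U$ as a $\PPV$-ring, and then comparing Galois groups—is precisely the argument of \cite[Thm.~2.2]{param_LAG} that the paper simply cites (observing only that the proof there uses nothing beyond the abstract hypotheses on the derivations listed in the statement). The factorization, the verification that $A\in F^{n\times n}$, the $\PPV$-ring property of $R$, and the inclusion $\tilde\G\leq\G$ are all correct as you have written them.

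The gap is in your argument for $\G\leq\tilde\G$. You claim that a $\tilde\G$-stable line in a $\del_t$-representation of $\G$ corresponds via Proposition~\ref{Tannaka} to an element $v\in R$ functorially \emph{invariant} under each $\G_{P_i}$, but stabilizing a line only means that each $\sigma\in\G_{P_i}(S)$ multiplies a generator by a character value, not that it fixes it; so \cite[Prop.~8.5]{Gilletetc} does not apply to such a generator and you cannot conclude $v\in F_{P_i}$, nor that $\G$ stabilizes the line. The argument behind the citation avoids Chevalley entirely and works directly with invariants: since $R\subseteq R_{P_i}=F_{P_i}\{Y_{P_i},Y_{P_i}^{-1}\}_{\del_t}=F_{P_i}\{Y,Y^{-1}\}_{\del_t}$ (via $Y_{P_i}=B_iY$) with compatible $\G_{P_i}$-actions, any $v\in\Frac(R)$ that is functorially $\tilde\G$-invariant is in particular $\G_{P_i}$-invariant inside $\Frac(R_{P_i})$, hence lies in $F_{P_i}$ by the Galois correspondence for $R_{P_i}/F_{P_i}$; combined with $v\in\Frac(R)\subseteq F_U$ and the intersection property this gives $v\in F$. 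Thus $\Frac(R)^{\tilde\G}=F$, and the Galois correspondence for $R/F$ forces $\tilde\G=\G$.
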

\begin{proof}
 This is a result from \cite{param_LAG}. More precisely, this was proven in \cite[Thm 2.2]{param_LAG} for fixed derivations $\del,\del_t$ on the fields $F, F_{P_i}, F_{\wp(P_i)}, F_U$ (with $\del(z)=1$, $\del(t)=0$; and $\del_t(z)=0, \del_t(t)=1$). However, all that was needed in the proof was that the derivations $\del,\del_t$ on these fields commute and are compatible with the inclusions $F \subseteq F_U, F_{P_i} \subseteq F_{\wp(P_i)}$ and that their fields of $\del$-constants equal $k((t))$. Therefore, the result transfers to the slightly more general setup of Theorem \ref{thmpatching}. 
\end{proof}

Theorem \ref{thmpatching} is designed for applications on the inverse parameterized differential Galois problem over $k((t))(x)$. The following example gives a sample application. 
\begin{ex}\label{ex SL2}
In this example, we would like to show that $\SL_2$ is a PPV-group over $F=k((t))(x)=k((t))(z)$. In a first step, we construct subgroups $\G_1,\dots,\G_4$ of $\SL_2$ that generate a Kolchin-dense subgroup of $\SL_2$. We do this in a way that the subgroups $\G_1,\dots,\G_4$ are of a very simple shape, hoping that we can then show that these subgroups are PPV-groups over $F_{P_1},\dots,F_{P_4}$ for suitable points $P_1,\dots,P_4$ on the $z$-line. Consider 
$$\G_1=\{ \begin{pmatrix}
          1 & \alpha \\
0 & 1 
         \end{pmatrix} \mid \del_t(\alpha)=0 \}, \ \G_2=\{ \begin{pmatrix}
          1 & 0 \\
\alpha & 1 
         \end{pmatrix} \mid \del_t(\alpha)=0 \},$$ 
$$\G_3=\{ \begin{pmatrix}
          1 & \alpha \\
0 & 1 
         \end{pmatrix} \mid \del_t(\alpha/t)=0 \}, \ \G_4=\{ \begin{pmatrix}
          1 & 0 \\
\alpha & 1 
         \end{pmatrix} \mid \del_t(\alpha t)=0 \}. $$
Note that $\G_1,\dots,\G_4$ are $\del_t$-algebraic subgroups of $\SL_2$ defined over $k((t))$. It can be shown that $\overline{<\G_1,\G_2,\G_3,\G_4>}^K=\SL_2$ (see for example Proposition 3.1. in \cite{param_LAG}). Fix pairwise distinct elements $q_1,\dots,q_4 \in k$. Let $P_1,\dots,P_4$ be the closed points on the $z$-line defined by $z=q_1,\dots,z=q_4$ and consider the nine overfields $F_{P_1}$, $F_{P_2}$, $F_{P_3}$, $F_{P_4}$, $F_{\wp(P_1)}$, $F_{\wp(P_2)}$, $F_{\wp(P_3)}$, $F_{\wp(P_4)}$, $F_U$ of $F$ as defined above. By Theorem \ref{thmpatching} it suffices to construct matrices $A_1,\dots, A_4$ with $A_i \in F_{P_i}^{2\times 2}$ such that there exist matrices $Y_1,\dots,Y_4$ with $Y_i \in \GL_2(F_{\wp(P_i)})$ and $\del(Y_i)=A_iY_i$ and furthermore such that the $\PPV$-ring $R_{P_i}=F_{P_i}\{Y_i,Y_i^{-1}\}_{\del_t}$ has $\PPV$-group $\G_i$ over $F_{P_i}$. The initial problem of finding a $\PPV$-ring over $F$ with $\PPV$-group $\SL_2$ is thus reduced to finding four $\PPV$-rings over the overfields $F_{P_1},\dots,F_{P_4}$ with $\PPV$-groups $\G_1,\dots,\G_4$. As $\G_1,\dots,\G_4$ are isomorphic to subgroups of $\mathbb{G}_a$, the latter task can be solved by choosing suitable logarithmic differential equations. We refer to the proof of Thereom 4.2. in \cite{param_LAG} for explicit differential equations.
 
\end{ex}

\section{A criterion on the inverse problem}\label{section descent}
\begin{notation}\label{not}
 In this section, $k_0$ is a field (of characteristic zero), $K_0=k_0((t_0))$ is a Laurent series field and we set $\del_{t_0}=\del/\del t_0$ on $K_0$. We define $F_0=K_0(x)$ and consider it as a $\del\del_{t_0}$-differential field with $\del=\del/\del x$ and $\del_{t_0}=\del/ \del t_0$. 
We consider the finite extension $K=k((t))$ of $K_0$, where $k/k_0$ is a finite Galois extension and $t$ denotes an $e$-th root of $t_0$ ($e\geq 1$), such that $k$ contains a primitive $e$-th root of unity $\zeta$. We consider $K$ as a $\del_{t_0}$-differential field extension of $K_0$ in the unique way (i.e., $\del_{t_0}(t)=t^{1-e}/e$). Similarly, we consider the field $F=K(x) \cong F_0\otimes_{K_0} K$ as a $\del\del_{t_0}$-field extension of $F_0$ in the unique way. We define $\Gamma=\Galf(K/K_0)\cong \Galf(F/F_0)$.
\end{notation}

First note that if $A \in F_0^{n\times n}$ such that there exists a $\PPV$-ring $R_0$ for the linear differential equation $\del(y)=Ay$ over $F_0=K_0(x)$ with $\GalN_Y(R_0/F_0)=\G \leq \GL_n$, then it is immediate from the definitions that $R_0\otimes_{K_0} K$ is a $\PPV$-ring for $\del(y)=Ay$ over $F_0\otimes_{K_0}K\cong K(x)$ with $\PPV$-group $\GalN_{Y }(R_0\otimes_{K_0} K/K(x))=\G_{K}$. For the inverse problem over $F_0=K_0(x)$, it is often easier to construct $\PPV$-rings over $K(x)$ for a suitable finite extension $K/K_0$ (depending on the group we would like to realize) instead of over $F_0$. In this section, we give a criterion that ensures that a $\PPV$-ring over $K(x)$ constructed using Theorem \ref{thmpatching} is of the form $R_0\otimes_{K_0} K$ for a $\PPV$-ring $R_0$ over $F_0$. For the non-parameterized case, this has been worked out in \cite{HHM}. The arguments in the parameterized case go along the same lines. 

As in Section \ref{section patching}, we define $z=\frac{x}{t}$. Note that $\del_{t_0}(z)=-zt^{-e}/e$. For every $\sigma \in \Gamma$, there exists a unique $n_\sigma$ with $0\leq n_\sigma \leq e-1$ such that $\sigma(t)=\zeta^{n_\sigma}t$. 
Details of the following facts can be found in \cite[Ex. 2.3]{HHM}. The action of $\Gamma$ on $F$ induces an action on the $z$-line $\mathbb{P}^1_k$ that sends a finite $k$-point $P$ of the form $z=q$ to the point $z=\zeta^{n_{\sigma^{-1}}}\cdot\sigma^{-1}(q)$. For each $\sigma \in \Gamma$, there are extensions of $\sigma \colon F\to F$ to $\del$-differential isomorphisms  
\[\sigma \colon F_{P^\sigma}=k((z-\zeta^{n_{\sigma^{-1}}}\sigma^{-1}(q),t)) \to F_{P}=k((z-q,t))\] and 
\[ \sigma \colon F_{\wp(P^\sigma)}=k((z-\zeta^{n_{\sigma^{-1}}}\sigma^{-1}(q)))((t)) \to F_{\wp(P)}=k((z-q))((t)).\] (They map $z-\zeta^{n_{\sigma^{-1}}}\sigma^{-1}(q)$ to $\zeta^{-n_\sigma}(z-q)$.) The isomorphism $\sigma \colon F_{\wp(P^\sigma)} \to F_{\wp(P)}$ restricts to the isomorphism $\sigma \colon F_{P^\sigma} \to F_{P}$. 

 It was shown in \cite[Lemma 4.4]{HHM} that for any $r \in \N$, there exist $r$ closed, finite $k$-points $P_1,\dots,P_r$ on the $z$-line $\mathbb{P}^1_k$ such that the orbits $P_1^\Gamma,\dots,P_r^\Gamma$ are disjoint and each of order $|\Gamma|$. We consider these $m:=r\cdot |\Gamma|$ points $P$ and the corresponding fields $F_P, F_{\wp(P)}, F_U$ as explained in Section \ref{section patching}. (We remark that unless $e=1$ the variable change $x\mapsto z$ is necessary in order that the orbits consists of $|\Gamma|$ points.) As we chose our set of $m$ points $\Gamma$-invariant, the action of $\Gamma$ on $F$ extends to an action of $\Gamma$ on $F_U$ as $\del$-differential automorphisms, compatible with all inclusions $F_U \subseteq F_{\wp(P)}, F_{\wp(P^\sigma)}$. In other words, $\sigma \colon F_U \to F_U$ is the restriction of $\sigma \colon F_{\wp(P^\sigma)} \to F_{\wp(P)}$ for every $P$.

\begin{lem} \label{defdelN}
The derivation $\del_{t_0} \colon k((t))(z) \to k((t))(z)$ extends to derivations $\del_{t_0} \colon F_U \to F_U$, $\del_{t_0} \colon F_{P} \to F_P$ and $\del_{t_0} \colon F_{\wp(P)} \to F_{\wp(P)}$ for all $P$. These derivations are compatible with the inclusions $F_U \subseteq F_{\wp(P)}$ and $F_P \subseteq F_{\wp(P)}$ and they commute with $\del$. Moreover, for every $\sigma \in \Gamma$, the $\del$-differential-isomorphisms $\sigma \colon F_{P^\sigma} \to F_P$, $\sigma \colon F_{\wp(P^\sigma)} \to F_{\wp(P)}$ and $\sigma \colon F_U \to F_U$ as defined above are actually $\del\del_{t_0}$-differential isomorphisms.
\end{lem}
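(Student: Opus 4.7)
The plan is to work with the auxiliary derivation $D = e t^{e}\del_{t_0}$ in place of $\del_{t_0}$, since $D$ satisfies $D|_k = 0$, $D(z) = -z$ and $D(t) = t$ — that is, polynomial rather than Laurent values on the generators. Once $D$ is extended to each of the overfields, one recovers $\del_{t_0} = D/(e t^{e})$ automatically, because $t$ is a unit in all of $F_U$, $F_P$ and $F_{\wp(P)}$.

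To extend $D$ to $F_P = \Frac(k[[z-q,t]])$: on $k[[z-q,t]]$ the prescriptions $D(z-q) = -(z-q)-q$, $D(t) = t$, $D|_k = 0$ determine a unique $k$-linear derivation continuous in the $(z-q,t)$-adic topology, and this descends to $F_P$ by the quotient rule. Analogously, $D$ extends to $k[[z-q]]$ by $(z-q)$-adic continuity, then to $k((z-q))$ by localisation, then to $k((z-q))[[t]]$ by prescribing $D(t) = t$, and finally to $F_{\wp(P)} = k((z-q))((t))$ by a further localisation. For $F_U$ the computation $D\bigl((z-q_i)^{-1}\bigr) = (z-q_i)^{-1} + q_i (z-q_i)^{-2}$ shows that $D$ preserves $k[(z-q_1)^{-1},\dots,(z-q_m)^{-1}]$; it extends by $t$-adic continuity to the power series ring in $t$ over this coefficient ring, and then to its fraction field. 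Compatibility with the inclusions $F_P \subseteq F_{\wp(P)}$ and $F_U \subseteq F_{\wp(P)}$ is automatic, as the three extensions all agree with the common prescription of $D$ on the generators $z$, $t$ and $k$.

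Commutation with $\del$ and $\Gamma$-equivariance now reduce to checks on these generators. For $[\del,\del_{t_0}] = 0$, one verifies that both $\del\del_{t_0}(z)$ and $\del_{t_0}\del(z)$ equal $-t^{-e-1}/e$, that both $\del\del_{t_0}(t)$ and $\del_{t_0}\del(t)$ vanish, and that both derivations annihilate $k$; continuity of the commutator in the adic topology then forces it to vanish everywhere. For the $\Gamma$-action, using $\sigma(x)=x$ one obtains $\sigma(z)=\zeta^{-n_\sigma}z$ and $\sigma(t)=\zeta^{n_\sigma}t$, and a direct check with $\zeta^{e}=1$ yields $\sigma\bigl(\del_{t_0}(t)\bigr)=\zeta^{n_\sigma}t^{1-e}/e=\del_{t_0}\bigl(\sigma(t)\bigr)$ and $\sigma\bigl(\del_{t_0}(z)\bigr)=-\zeta^{-n_\sigma}zt^{-e}/e=\del_{t_0}\bigl(\sigma(z)\bigr)$, while on $k$ both composites are zero. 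Continuity of $\sigma$ and of $\del_{t_0}$ then propagates these identities to the entire field.

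The main obstacle is the topological bookkeeping: one must pin down the correct adic topology on each of the three overfields (mixed $(z-q,t)$-adic for $F_P$, $(z-q)$-adic then $t$-adic for $F_{\wp(P)}$, and $t$-adic over a non-complete coefficient ring for $F_U$), verify continuity of $D$ and of $\del$ for each, and ensure that the nested completions admit unique continuous derivation extensions. Once this framework is in place, every remaining verification reduces to the generator-level computations on $z$, $t$ and $k$ sketched above.
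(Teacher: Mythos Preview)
Your approach is correct and arrives at the same result as the paper, but the organisation is different. The paper works top-down: it writes an explicit formula for $\del_{t_0}$ directly on $F_{\wp(P)}=k((z-q))((t))$, namely
\[
\del_{t_0}\Bigl(\sum_{n\ge r} f_n t^n\Bigr)=\sum_{n\ge r}\Bigl(\tfrac{n}{e}f_n-\tfrac{z}{e}\,\tfrac{\del f_n}{\del(z-q)}\Bigr)t^{n-e},
\]
checks that this commutes with $\del$ and restricts to $\del/\del t_0$ on $F$, and then observes that it sends $k[[z-q,t]]$ into $t^{-e}k[[z-q,t]]$ and $k[(z-q_i)^{-1}][[t]]$ into $t^{-e}k[(z-q_i)^{-1}][[t]][z]$, hence restricts to $F_P$ and $F_U$. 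The $\Gamma$-equivariance is then verified by writing $\sigma$ explicitly on double Laurent series and comparing. Your route is bottom-up: the rescaling $D=et^{e}\del_{t_0}$ with $D(z)=-z$, $D(t)=t$ eliminates the $t^{-e}$ shift and lets you extend by adic continuity on each field separately, after which all compatibilities (inclusions, $[\del,\del_{t_0}]=0$, $\Gamma$-equivariance) reduce to generator checks plus density. The trade-off is exactly what you flag: the paper's explicit formula makes the restriction steps a one-line observation, whereas your version front-loads the work into establishing unique continuous extensions on three different adic completions and verifying that those extensions match on overlaps. Both are legitimate; your rescaling trick is cleaner conceptually, while the paper's formula is more self-contained and avoids the topological scaffolding.
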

\begin{proof}
 We first define derivations $\del_{t_0} \colon F_{\wp(P)} \to F_{\wp(P)}$ for all $P$ and then show that they restrict to derivations $\del_{t_0} \colon F_{P} \to F_P$ and to one single derivation $\del_{t_0} \colon F_U \to F_U$. Let $P$ be a point in one of the orbits  $P_1^\Gamma,\dots,P_r^\Gamma$ and let $q \in k$ such that $P$ is defined by $z=q$. Then $F_{\wp(P)}=k((z-q))((t))$ and we define 
\[ \del_{t_0} \colon k((z-q))((t)) \to k((z-q))((t)), \ \sum \limits_{n=r}^\infty f_nt^n \to \sum \limits_{n=r}^\infty \left(\frac{n}{e}f_n -  \frac{z}{e}\frac{\del f_n}{\del (z-q)}\right)t^{n-e}. \]
It is easy to check that this map is a derivation that commutes with $\del\colon F_{\wp(P)} \to F_{\wp(P)}$ and that it restricts to $\frac{\del}{\del t_0}$ on $F=k((t))(z)$. If $f \in k[[z-q]][[t]]$, then $\del_{t_0}(f) \in t^{-e}k[[z-q]][[t]]$. Thus $\del_{t_0}$ maps elements of $F_P$ (which is defined as the fraction field of $k[[z-q]][[t]]$) to elements of $F_P$ and it thus restricts to a derivation $\del_{t_0} \colon F_P \to F_P$. Let now $q_1,\dots,q_m \in k$ be such that the unit of the orbits  $P_1^\Gamma,\dots,P_r^\Gamma$ consists precisely of the points defined by $z=q_1,\dots,z=q_m$. Recall that $F_U=\Frac(k[(z-q_1)^{-1},\dots,k(z-q_m)^{-1}][[t]])$ and note that $\del/ \del(z-q_i) \colon k((z-q_i))\to k((z-q_i))$ restricts to 
\[\del/\del z \colon k[(z-q_1)^{-1},\dots,k(z-q_m)^{-1}] \to k[(z-q_1)^{-1},\dots,k(z-q_m)^{-1}]\] for every $i=1,\dots,m$. Thus if $f \in k[(z-q_1)^{-1},\dots,k(z-q_m)^{-1}][[t]]$ then \[ \del_{t_0}(f) \in t^{-e}k[(z-q_1)^{-1},\dots,k(z-q_m)^{-1}][[t]][z] \] and we conclude that $\del_{t_0} \colon k((z-q_i))((t)) \to k((z-q_i))((t))$ restricts to a derivation $\del_{t_0} \colon F_U \to F_U$ for every $i=1,\dots,m$ and that this restriction does not depend on $i$. 
 
It remains to show that for all $\sigma \in \Gamma$, $\sigma \colon F_{\wp(P^\sigma)} \to F_{\wp(P)}$ is a $\del_{t_0}$-isomorphism (with respect to the derivations $\del_{t_0} \colon F_{\wp(P^\sigma)} \to F_{\wp(P^\sigma)}$ and $\del_{t_0} \colon F_{\wp(P)} \to F_{\wp(P)}$) for every $P \in P_1^\Gamma\cup\dots\cup P_r^\Gamma$. Let $P$ be such a point and let $q \in k$ be such that $P$ is defined by $z=q$. Then $P^\sigma$ is the point defined by $z=\zeta^{n_{\sigma^{-1}}}\cdot\sigma^{-1}(q)$ and $\sigma \colon F_{\wp(P^\sigma)} \to F_{\wp(P)}$ can be written explicitly as 
\[ \sigma \colon k((z-\zeta^{n_{\sigma^{-1}}}\sigma^{-1}(q) ))((t)) \to k((z-q))((t)) \]
\[ \sum \limits_{i=r}^\infty \sum \limits_{j=r_i}^\infty a_{ij}(z-\zeta^{n_{\sigma^{-1}}}\sigma^{-1}(q) )^j t^i \mapsto \sum \limits_{i=r}^\infty \sum \limits_{j=r_i}^\infty \zeta^{n_\sigma(i-j)}\sigma(a_{ij})(z-q)^j t^i.  \] It is now easy to check that $\del_{t_0} \circ \sigma = \sigma \circ \del_{t_0}$ holds (recall that $\zeta^e=1$). 
\end{proof}

We proceed with a proposition that asserts that if the fundamental solution matrices $Y_P\in \GL_n(F_{\wp(P)})$ for $P \in P_1^\Gamma\cup\dots\cup P_r^\Gamma$ are chosen in a ``$\Gamma$-equivariant'' way, then the $\PPV$-ring $R/K(x)$ obtained in Theorem \ref{thmpatching} descends to a $\PPV$-ring $R_0/K_0(x)$.

\begin{prop}\label{theorem equivariant}
Under the assumptions of Theorem \ref{thmpatching} and in the above setup (i.e., $\mathcal{P}=P_1^\Gamma \cup \dots P_r^\Gamma$), assume that $\sigma(Y_{P^\sigma})=Y_P$ for all $P \in \Pcal$ and all $\sigma \in \Gamma$ (where $\sigma(Y_{P^\sigma}) \in \GL_n(F_{\wp(P)}$ denotes the image of $Y_{P^\sigma}\in \GL_n(F_{\wp(P^\sigma)})$ under $\sigma \colon F_{\wp(P^\sigma)}\to F_{\wp(P)}$ applied to all entries of $Y_{P^\sigma}$). Then we can choose the fundamental solution matrix $Y \in \GL_n(F_U)$ for $A$ obtained in Theorem \ref{thmpatching} such that its entries are $\Gamma$-invariant.
\end{prop}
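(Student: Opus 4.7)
The plan is to apply Theorem \ref{thmpatching} to obtain some fundamental solution matrix $Y \in \GL_n(F_U)$, and then twist it by a matrix $E \in \GL_n(F)$ produced via Hilbert~90 so that $Y' := EY$ has $\Gamma$-invariant entries. Throughout, I use the $\Gamma$-actions on $F_U$, $F_P$, and $F_{\wp(P)}$ provided by Lemma \ref{defdelN}, together with the $\Gamma$-equivariance hypothesis $\sigma(Y_{P^\sigma}) = Y_P$.

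The first step is to show that the matrix $D_\sigma := \sigma(Y) Y^{-1} \in \GL_n(F_U)$ in fact lies in $\GL_n(F)$. Inspecting the proof of Theorem \ref{thmpatching}, the construction of $Y$ comes from a Harbater--Hartmann matrix factorization and in particular produces matrices $B_P \in \GL_n(F_P)$ with $Y = B_P Y_P$ inside $\GL_n(F_{\wp(P)})$ for every $P \in \Pcal$. Applying the $\del\del_{t_0}$-isomorphism $\sigma \colon F_{\wp(P^\sigma)} \to F_{\wp(P)}$ to the patching relation at $P^\sigma$ and using $\sigma(Y_{P^\sigma}) = Y_P$, I get $\sigma(Y) = \sigma(B_{P^\sigma})\, Y_P$ in $F_{\wp(P)}$. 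Comparing this with $Y = B_P Y_P$ yields $D_\sigma = \sigma(B_{P^\sigma}) B_P^{-1} \in \GL_n(F_P)$. Since this holds for every $P \in \Pcal$ while $D_\sigma$ lies in $\GL_n(F_U)$ by construction, the patching equalizer property for the fields $F, F_U, F_P, F_{\wp(P)}$ forces $D_\sigma \in \GL_n(F)$.

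The second step is cohomological. A short computation gives $D_{\sigma\tau} = \sigma(D_\tau) D_\sigma$, so the map $\sigma \mapsto D_\sigma^{-1}$ satisfies the standard $1$-cocycle relation for the natural $\Gamma$-action on $\GL_n(F)$. Since $F/F_0$ is a finite Galois extension with group $\Gamma$, Hilbert~90 for $\GL_n$ produces $E \in \GL_n(F)$ with $D_\sigma = \sigma(E)^{-1} E$ for all $\sigma \in \Gamma$. Setting $Y' := EY \in \GL_n(F_U)$, I then compute $\sigma(Y') = \sigma(E)\sigma(Y) = \sigma(E) D_\sigma Y = \sigma(E) \sigma(E)^{-1} E Y = Y'$, so the entries of $Y'$ are $\Gamma$-invariant. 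Since $E \in \GL_n(F)$, the $\del_t$-$F$-algebra generated by the entries of $Y'$ and $\det(Y')^{-1}$ coincides with the one generated by $Y$ and $\det(Y)^{-1}$, so $Y'$ is merely a different fundamental solution matrix inside the same $\PPV$-ring $R$.

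The main obstacle will be the first step, verifying that $D_\sigma \in \GL_n(F)$. This requires unpacking the patching construction in \cite{param_LAG} (to access the local factorizations $Y = B_P Y_P$) and invoking the equalizer/intersection property for the fields $F_U, F_P, F_{\wp(P)}$ that underlies Harbater--Hartmann patching. Once $D_\sigma$ is placed in $\GL_n(F)$, the cocycle identity and the Hilbert~90 application are formal.
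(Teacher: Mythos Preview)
Your proposal is correct and follows exactly the strategy the paper attributes to \cite[Thm.~2.4(b)]{HHM}: obtain any $Y$ from patching, show $D_\sigma=\sigma(Y)Y^{-1}\in\GL_n(F)$ via the local factorizations $Y=B_PY_P$ together with the intersection property $F_U\cap\bigcap_P F_P=F$, and then apply Hilbert~90 to produce $E\in\GL_n(F)$ with $EY$ $\Gamma$-invariant. The paper simply cites this argument rather than spelling it out, so your write-up supplies precisely the details the paper omits.
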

\begin{proof}
This statement is proved in \cite[Thm. 2.4.b)]{HHM} for the non-parameterized case and since the statement does not depend on $\del_{t_0}$, it also holds in the context of Theorem \ref{thmpatching}. (The strategy of the proof in \cite[Thm. 2.4.b)]{HHM} is to show that for a fixed fundamental solution matrix $Y \in \GL_n(F_U)$, there exists a matrix $B \in \GL_n(F)$ such that the entries of $B\cdot Y$ are $\Gamma$-invariant.)
\end{proof}

 Note that Lemma \ref{lemmainvariantPPVR} (applied to $L=F_U$) assures that the $\PPV$-ring $R=F\{Y,Y^{-1}\}_{\del_{t_0}}$ with $Y$ as in Proposition \ref{theorem equivariant} is of the form $R_0\otimes_{K_0}K$ for a $\PPV$-ring $R_0$ over $K_0(x)$. 

\begin{thm}\label{criterion}
Under the assumptions in Notation \ref{not}, let $\G \leq \GL_n$ be a linear $\del_{t_0}$-algebraic group defined over $k_0((t_0))$. Assume that $\G_{k((t))}=\overline{\left \langle \G_1,\dots,\G_r \right \rangle}^K$ for some Kolchin-closed subgroups $\G_i \leq \G_{k((t))}$ defined over $k((t))$. If for all $i$, and for all finite closed $k$-points $P$ on the $z$-line $\mathbb{P}^1_k$, there exist a $\PPV$-ring $R_P \subseteq F_{\wp(P)}$ with $\PPV$-group $\G_i$ over $F_P$, then there exists a $\PPV$-ring $R$ over $k_0((t_0))(x)$ with $\PPV$-group $\G$. 

(Here we consider $F_P$ and $F_{\wp(P)}$ as $\del\del_{t_0}$-fields via $\del_{t_0}$ as constructed in Lemma \ref{defdelN}. In particular, $\del_{t_0}(t)=t^{1-e}/e$ and $\del_{t_0}(z)=-zt^{-e}/e$). 
\end{thm}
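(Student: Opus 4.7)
The plan is to assemble a $\Gamma$-equivariant family of local $\PPV$-rings on a $\Gamma$-stable set of points, apply Theorem \ref{thmpatching} to obtain a $\PPV$-ring over $F=K(x)$ whose $\PPV$-group is $\G_{k((t))}$, and then descend via Lemma \ref{lemmainvariantPPVR} to obtain a $\PPV$-ring over $F_0=K_0(x)$ with $\PPV$-group $\G$. First, using \cite[Lemma 4.4]{HHM}, I would choose $r$ closed $k$-points $P_1,\dots,P_r$ on the $z$-line $\mathbb{P}^1_k$ whose $\Gamma$-orbits are pairwise disjoint and each of cardinality $|\Gamma|$, and set $\Pcal=P_1^\Gamma\cup\dots\cup P_r^\Gamma$. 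Freeness of the action on each orbit means every $Q\in\Pcal$ admits a unique expression $Q=P_i^\sigma$.

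The core step is to build $\Gamma$-equivariant local data. By hypothesis, for each $i$ there is a $\PPV$-ring $R_{P_i}\subseteq F_{\wp(P_i)}$ over $F_{P_i}$ with $\PPV$-group $\G_i$ and some fundamental matrix $Y_{P_i}\in\GL_n(R_{P_i})$. For $Q=P_i^\sigma$ I set $Y_Q:=\sigma^{-1}(Y_{P_i})$ and $R_Q:=\sigma^{-1}(R_{P_i})=F_Q\{Y_Q,Y_Q^{-1}\}_{\del_{t_0}}$, using the $\del\del_{t_0}$-isomorphisms $\sigma^{-1}\colon F_{\wp(P_i)}\to F_{\wp(Q)}$ and $\sigma^{-1}\colon F_{P_i}\to F_Q$ supplied by Lemma \ref{defdelN}. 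Then $R_Q$ is a $\PPV$-ring over $F_Q$ whose $\PPV$-group with respect to $Y_Q$ is $\sigma^{-1}(\G_i)$, which is a $\del_{t_0}$-algebraic subgroup of $\G_{k((t))}$ (the latter is $\Gamma$-stable, since $\G$ is defined over $K_0$). The composition laws for the isomorphisms in Lemma \ref{defdelN} together with the freeness of the action on each orbit give the equivariance $\tau(Y_{Q^\tau})=Y_Q$ for all $\tau\in\Gamma$ and $Q\in\Pcal$.

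Theorem \ref{thmpatching} then yields a $\PPV$-ring $R\subseteq F_U$ over $F$ with a fundamental matrix $Y\in\GL_n(F_U)$ whose $\PPV$-group equals the Kolchin closure $\overline{\langle\sigma^{-1}(\G_i):i=1,\dots,r,\ \sigma\in\Gamma\rangle}^K$. The subfamily corresponding to $\sigma=1$ is $\G_1,\dots,\G_r$, which already generates $\G_{k((t))}$ in the Kolchin closure by hypothesis, while all other members of the family lie in $\G_{k((t))}$; hence this closure equals $\G_{k((t))}$ exactly. By Proposition \ref{theorem equivariant} I may replace $Y$ by a $\Gamma$-invariant fundamental solution matrix for the same equation without altering the $\PPV$-group. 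Lemma \ref{lemmainvariantPPVR}, applied to $L=F_U$ (which by Lemma \ref{defdelN} carries a compatible $\Gamma$-action as $\del\del_{t_0}$-automorphisms), then produces the $\PPV$-ring $R_0:=F_0\{Y,Y^{-1}\}_{\del_{t_0}}$ over $F_0=k_0((t_0))(x)$; since $\G_{K}$ is the base change of the $K_0$-group $\G$, the final clause of that lemma gives $\PPV$-group exactly $\G$.

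The step I expect to be the main obstacle is the careful bookkeeping required for the $\Gamma$-equivariance $\tau(Y_{Q^\tau})=Y_Q$: one must pin down conventions for the compositions $\sigma\colon F_{\wp(P^\sigma)}\to F_{\wp(P)}$ across different orbits and verify that the transferred $\PPV$-groups $\sigma^{-1}(\G_i)$ are genuinely $\del_{t_0}$-algebraic subgroups of $\G_{k((t))}$ defined over $k((t))$. Once these identifications are fixed, however, the verification reduces to a formal check exploiting only the freeness of the action on each orbit.
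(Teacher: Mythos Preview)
Your proposal is correct and follows essentially the same route as the paper: choose $r$ points with free $\Gamma$-orbits, transport the given local $\PPV$-rings around each orbit via the $\del\del_{t_0}$-isomorphisms of Lemma \ref{defdelN}, apply Theorem \ref{thmpatching} and Proposition \ref{theorem equivariant}, and descend via Lemma \ref{lemmainvariantPPVR}. The only point where the paper is more explicit is the one you flag as the main obstacle: to verify that the transported $\PPV$-group at $Q$ lands inside $\G_{k((t))}$, the paper passes to a differentially closed extension $\hat K\supseteq k((t))$, extends $\sigma$ to a $\del_{t_0}$-automorphism of $\hat K$, computes $\G_Q(\hat K)=\sigma(\G_i(\hat K))\subseteq\sigma(\G(\hat K))=\G(\hat K)$ (the last equality because $\G$ is defined over $K_0$), and concludes by Seidenberg's differential Nullstellensatz.
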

\begin{proof}
Let $\Gamma$ denote the (finite) Galois group of $k((t))/k_0((t_0))$. As explained above, there exist $b_1,\dots,b_r \in k$ such that for the points $P_1,\dots,P_r$ on the $z$-line $\mathbb{P}^1_k$ given by $z=b_1,\dots,z=b_r$ the following holds: the orbits $P_1^\Gamma,\dots,P_r^\Gamma$ (under the action of $\Gamma$ as explained above) are all disjoint and of order $|\Gamma|$. We set $m=r\cdot |\Gamma|$ and consider the set of $m$ points $\Pcal=P_1^\Gamma\cup\dots\cup P_r^\Gamma$. For all $1\leq i \leq r$, there exist $\PPV$-rings $R_{P_i}=F_{P_i}\{Y_{P_i},Y_{P_i}^{-1}\}_{\del_{t_0}}\subseteq F_{\wp(P_i)}$ such that $\GalN_{Y_{P_i}}(R_{P_i}/F_{P_i})=\G_i$, by assumptions. We now define $\PPV$-rings $R_P/F_P$ for an arbitrary $P \in \Pcal$. Let $P \in \Pcal$. Then there exist unique $i \leq r$ and $\sigma \in \Gamma$ with $P^\sigma=P_i$. Recall that we have $\del\del_{t_0}$-differential isomorphisms $\sigma \colon F_{P_i}\to F_P$ and $\sigma \colon F_{\wp(P_i)}\to F_{\wp(P)}$. We set $Y_P=\sigma(Y_{P_i}) \in \GL_n(F_{\wp(P)})$ and $R_P=F_P\{Y_P, Y_P^{-1}\}_{\del_{t_0}}\subseteq F_{\wp(P)}$. This is a $\PPV$-ring over $F_{P}$ for the matrix $\del(Y_{P})Y_{P}^{-1}=\sigma(\del(Y_i)Y_i^{-1}) \in \sigma(F_{P_i}^{n\times n})=F_P^{n\times n}$. We claim that its $\PPV$-group $\G_P:=\GalN_{Y_P}(R_P/F_P)$ is contained in $\G$. Fix a differentially closed field $\hat K \supseteq k((t))$. By Seidenberg's differential Nullstellensatz, it suffices to show that $\G_P(\hat K) \subseteq \G(\hat K)$. Fix an extension of $\sigma$ from $k((t))$ to $\hat K$, i.e. a $\del_{t_0}$-differential isomorphism $\sigma \colon \hat K \to \hat K$ extending $\sigma \colon k((t)) \to k((t))$. 
We obtain a $\del\del_{t_0}$-differential isomorphism $\sigma \colon R_{P_i}\otimes_{k((t))} \hat K \to R_P\otimes_{k((t))} \hat K$.
Hence \[\underline{\Aut}^{\del\del_{t_0}}(R_P/F_P)(\hat{K})= \sigma\underline{\Aut}^{\del\del_{t_0}}(R_{P_i}/F_{P_i})(\hat{K})\sigma^{-1}\] and thus 
 \begin{eqnarray*}
   \G_P(\hat K)=\GalN_{Y_P}(R_P/F_P)(\hat {K})&=&\GalN_{\sigma(Y_{P_i})}(\sigma(R_{P_i})/\sigma(F_{P_i}))(\hat{K})\\
   &=&\sigma(\GalN_{Y_{P_i}}(R_{P_i}/F_{P_i})(\hat{K}))\\
   &=&\sigma(\G_i(\hat {K})) \\
   &\subseteq &\sigma(\G(\hat K))= \G(\hat K),
 \end{eqnarray*}
 since $\G$ is defined over $k_0((t_0))$. 

We can now apply Theorem \ref{thmpatching} to obtain a $\PPV$-ring $R=k((t))(x)\{Y,Y^{-1}\}_{\del_{t_0}}\subseteq F_U$ over $k((t))(x)$ with $\PPV$-group 
\[\GalN_Y(R/F)=\overline{\left \langle \GalN_{Y_P}(R_P/F_P) \mid P \in \Pcal \right \rangle}^K= \overline{\left \langle \G_1,\dots,\G_r \right \rangle}^K=\G_{k((t))}.\] Moreover, we may assume that the entries of $Y \in \GL_n(F_U)$ are $\Gamma$-invariant by Proposition \ref{theorem equivariant}. We can now apply Lemma \ref{lemmainvariantPPVR} (with $L=F_U$) and conclude that $k_0((t_0))(x)\{Y,Y^{-1} \}_{\del_{t_0}}$ is a $\PPV$-ring over $k_0((t_0))(x)$ with $\PPV$-group $\G$. 
\end{proof}

To illustrate how this theorem can be applied, we translate it into a more explicit criterion:
\begin{crit} \label{crit}
Let $\G$ be a linear $\del_t$-linear algebraic group over $k((t))$. Suppose that we would like to show that $\G$ is a $\PPV$-group over $k((t))(x)$, where we consider $k((t))(x)$ as a $\del\del_t$-field via $\del=\del/\del x$ and $\del_t=\del/\del t$. Proceed as follows.
\begin{description}
 \item[(1)] Rename $k_0:=k$, $t_0:=t$ and similarly $\del_{t_0}:=\del_t$.
\item[(2)] Find finitely many $\del_{t_0}$-algebraic subgroups $\G_1,\dots,\G_r$ of $\G_{\overline{k_0((t_0))}}$ such that \\ $\overline{\left \langle \G_1,\dots,\G_r \right \rangle}^K=\G_{\overline{k_0((t_0))}}$.\\ (Choose $\G_1,\dots,\G_r$ such that it seems feasible to construct explicit $\PPV$-rings with $\PPV$-groups $\G_1,\dots,\G_r$. For example, these subgroups should be of small dimensions and of a ``simple structure''. )
\item[(3)] Let $K/k_0((t_0))$ be a finite field extension such that all subgroups $\G_1,\dots,\G_r$ are defined over $K$. 
After enlarging $K$ if necessary, we may assume $K=k((t))$ for a finite Galois extension $k/k_0$ and an $e$-th root $t$ of $t_0$ ($e\geq 1$), see \cite[Lemma 3.4]{HHM}. We may also assume that $k$ contains a primitive $e$-th root of unity $\zeta$. Set $z=x/t$. Note that our notation now conforms to Notation \ref{not}.
\item[(4)] For every $1\leq i \leq r$ and every $q \in k$, construct a $\PPV$-ring $R_{q,i}$ over $k((t,z-q))$ such that $R_{q,i} \subseteq k((z-q))((t))$ and such that the $\PPV$-group of $R_{q,i}/k((t,z-q))$ equals $\G_i$. Here, ``$\PPV$-ring over $k((t,z-q))$'' is meant with respect to the $\del\del_{t_0}$-differential structure on $k((t,z-q))$, where $\del_{t_0}$ is as defined in Lemma \ref{defdelN} and $\del=1/t\cdot \del/\del(z-q)$. 
\end{description}
If Task (4) can be completed succesfully, then Theorem \ref{criterion} asserts that there exists a $\PPV$-ring over $k_0((t_0))(x)$ with $\PPV$-group $\G$. 

\end{crit}

\section{Results}\label{section results}
\begin{lem}\label{building blocks}  
Under the assumptions as in Notation \ref{not}, let $\Hcal\leq \GL_n$ be a linear $\del_{t_0}$-algebraic group defined over $k((t))$ that is $k((t))$-isomorphic as $\del_{t_0}$-algebraic group to either 
\begin{enumerate}
 \item $\Z/r\Z$, where $r \in \N$ is such that $k$ contains a primitive $r$-th root of unity , or
 \item a Kolchin-closed subgroup $H$ of $\Ga$, such that $H=\overline{\left \langle h \right \rangle}^K$ for some $h \in \Ga(k((t)))$, or
 \item the constant subgroup $\Gm^\bigtriangleup$ of $\Gm$.
\end{enumerate}
Then for all closed, finite $k$-points $P$ on the $z$-line $\mathbb{P}^1_k$, there exists a $\PPV$-ring $R_P/F_P$ with $R_P \subseteq F_{\wp(P)}$ and with $\GalN_Y(R_P/F_P)=\Hcal$ for a suitable fundamental solution matrix $Y \in \GL_n(R_P)$. (Here, we consider the fields $F_P$ and $F_\wp(P)$ as $\del\del_{t_0}$-differential fields with $\del$ as defined in Notation \ref{not} and with $\del_{t_0}$ as defined in Lemma \ref{defdelN}.)
\end{lem}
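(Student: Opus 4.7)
The plan is to produce, in each case, an explicit fundamental solution that lives in $F_{\wp(P)}\setminus F_P$, built from a classical ``special function'' (a root, a logarithm, or an exponential) that converges formally in $F_{\wp(P)}=k((z-q))((t))$ but not in the smaller field $F_P=k((t,z-q))$. By Proposition \ref{Tannaka}, it suffices to construct, for each $\Hcal$, a PPV-ring $R_P\subseteq F_{\wp(P)}$ over $F_P$ whose PPV-group is isomorphic as a $\del_{t_0}$-algebraic group to the abstract group listed (in some convenient faithful representation); changing the embedding in $\GL_n$ then produces the PPV-group equal to $\Hcal$ inside the given $\GL_n$. Writing $\pi=z-q$ (so that $\del=(1/t)\,\del/\del\pi$ on $F_{\wp(P)}$), I take $y=(1+t/\pi)^{1/r}$ in case (a), $y=h\phi$ with $\phi=\log(1+\pi/t)=\sum_{n\geq 1}(-1)^{n+1}\pi^n/(nt^n)$ in case (b), and $y=\exp(t/\pi)=\sum_{n\geq 0}t^n\pi^{-n}/n!$ in case (c), all understood as formal series in $F_{\wp(P)}$.

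The verification proceeds in three steps. Step 1: a direct calculation using the explicit formula for $\del_{t_0}$ from Lemma \ref{defdelN} shows $\del(y)\in F_P$ (or $\del(y)/y\in F_P$) and that all $\del_{t_0}$-derivatives of $y$ already lie in the $F_P$-module spanned by $y$ and $y^{-1}$ in cases (a), (c); in the harder case (b) the computation gives $L(y)=h^2\,\del_{t_0}(\phi)=-h^2(2z-q)t^{-e}/(e(t+\pi))\in F_P$ with $L=h\del_{t_0}-\del_{t_0}(h)\del_{t_0}^0$. Hence the $\del\del_{t_0}$-subalgebra $R_P$ of $F_{\wp(P)}$ generated by $y$ (and $y^{-1}$) is contained in $F_{\wp(P)}$; the equality $C_{R_P}=k((t))=C_{F_P}$ is automatic from $C_{F_{\wp(P)}}=k((t))$, and $\del$-simplicity of $R_P$ follows from the explicit algebra structure: a finite \'etale extension in (a), and the standard polynomial/Laurent PV structure (for a transcendental solution whose $\del$-antiderivative or $\del$-$\log$ lies in $F_P$) in (b), (c). Step 2: the inclusions $\GalN_Y(R_P/F_P)\leq\Hcal$ follow, respectively, from the algebraic relation $y^r=1+t/\pi$ in case (a); from Lemma \ref{AequivalenzUG}.b applied to $L(y)\in F_P$ in case (b), using $H=\Ga^L$ from Example \ref{ex Kolchin Abschluss}; and from Lemma \ref{AequivalenzUG}.a with $L=\del_{t_0}^0$ applied to $\del_{t_0}(y)/y\in F_P$ in case (c).

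Step 3 (reverse inclusions) uses different reasoning in the three cases. For (a), the $(t+\pi)$-adic valuation of $f=1+t/\pi=(t+\pi)/\pi$ is exactly $1$, so $f$ is not a proper $s$-th power in $F_P$ for any $1<s\mid r$, forcing $F_P[y]/F_P$ of degree $r$ and PPV-group $\mu_r\cong\Z/r\Z$. For (b), Example \ref{ex Kolchin Abschluss} says $H$ has no nontrivial Kolchin-closed proper subgroups, so it suffices to show $\phi\notin F_P$; assuming a coprime presentation $\phi=A/B$ with $A,B\in k[[t,\pi]]$ and using $\del/\del\pi(\phi)=1/(t+\pi)$ gives $B^2=(t+\pi)(A'B-AB')$, forcing $(t+\pi)\mid B$; writing $B=(t+\pi)^m B_1$ with $m\geq 1$ and $(t+\pi)\nmid B_1$ and reducing the resulting identity modulo $(t+\pi)$ yields $(t+\pi)\mid mAB_1$, contradicting coprimality. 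For (c), an analogous $(\pi)$-adic argument shows $y^n\notin F_P$ for every $n\neq 0$: the identity $\del/\del\pi(y^n)/y^n=-nt/\pi^2$ would require the logarithmic derivative of an element of $F_P^\times$ to have $\pi$-adic valuation $-2$, whereas decomposing $f=\pi^k g$ with $g$ a unit in the DVR $k[[t,\pi]]_{(\pi)}$ gives $f'/f=k/\pi+g'/g$ of valuation $\geq -1$. Hence $y$ is transcendental over $F_P$, the non-parameterized differential Galois group of $F_P[y,y^{-1}]/F_P$ is all of $\Gm$, and \cite[Prop.~3.6(2)]{CassSin} forces the PPV-group to be Zariski-dense in $\Gm$; combined with its inclusion in $\Gm^\bigtriangleup$ and the classification of Kolchin-closed subgroups of $\Gm$ recalled before Proposition \ref{order one}, this yields equality with $\Gm^\bigtriangleup$.

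The main obstacle is the bookkeeping in case (b): unlike cases (a), (c), where $\del_{t_0}(y)/y\in F_P$ is evident from the defining formal series, here $\del_{t_0}$ from Lemma \ref{defdelN} does \emph{not} annihilate the $k((\pi))$-coefficients of $\phi$, so computing $L(y)$ requires tracking the two pieces $(n/e)f_n t^{n-e}$ and $-(z/e)(\del f_n/\del\pi)t^{n-e}$ of $\del_{t_0}$ across the logarithmic expansion of $\phi$; it is only a slightly surprising cancellation between these pieces—together with the identity $\sum_{n\geq 1}(-1)^{n+1}\pi^{n-1}t^{-n-e}=t^{-e}/(t+\pi)$ obtained from the geometric series—that makes $L(y)$ land inside $F_P$ and makes the whole construction succeed.
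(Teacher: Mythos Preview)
Your treatment of cases (a) and (c) is essentially correct and close to the paper's (the paper cites \cite{HHM} for the degree computation in (a) and for the transcendence of $y$ in (c), while you give direct valuation arguments, which is fine).

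There is, however, a genuine gap in case (b). Your element $\phi=\log(1+\pi/t)=\sum_{n\geq 1}(-1)^{n+1}\pi^n/(nt^n)$ does \emph{not} lie in $F_{\wp(P)}=k((\pi))((t))$: the term of index $n$ contributes $t^{-n}$, so the $t$-exponents are unbounded below, and a Laurent series in $t$ over $k((\pi))$ must have only finitely many negative $t$-powers. (Equivalently, the ``geometric series'' identity you invoke at the end, $\sum_{n\geq 1}(-1)^{n+1}\pi^{n-1}t^{-n}=1/(t+\pi)$, is not a valid identity in $F_{\wp(P)}$: in that field the unique expansion of $1/(t+\pi)$ is $\pi^{-1}\sum_{m\geq 0}(-1)^m t^m\pi^{-m}$.) Hence $R_P=F_P\{y\}_{\del_{t_0}}$ is not a subring of $F_{\wp(P)}$, and the lemma's hypothesis $R_P\subseteq F_{\wp(P)}$---which is exactly what is needed for the patching input in Theorem~\ref{thmpatching}---fails. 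The downstream computations you carry out ($\del/\del\pi(\phi)=1/(t+\pi)$, the closed form for $L(y)$, and the $\phi\notin F_P$ argument) are all internally consistent with your choice of $\phi$, so this is not a typo but a wrong choice of special function.

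The fix is to swap the roles of $t$ and $\pi$ in the logarithm: take $f=\log(1+t/\pi)=\sum_{n\geq 1}(-1)^{n+1}t^n/(n\pi^n)\in k((\pi))[[t]]\subseteq F_{\wp(P)}$ (this is the paper's choice). Then $\del(f)=-1/(\pi(t+\pi))\in F_P$ and a short computation with the formula from Lemma~\ref{defdelN} gives $\del_{t_0}(f)\in F_P$ as well, so $R_P=F_P[f]$ and Lemma~\ref{AequivalenzUG}.b applies exactly as you intended. Your $\phi\notin F_P$ argument must then be redone for this $f$ (now $\del f/\del\pi=-t/(\pi(t+\pi))$); a $(\pi)$- or $(t+\pi)$-adic version of the same idea works, or one can cite \cite{param_LAG} as the paper does.
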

\begin{proof}
Let $P \in \mathbb{P}^1_k$ be a point of the form $z=q$ for some $q \in k$. 

a) It was shown in \cite[Lemma 3.6]{HHM} that $y:=(1-(z-q)^{-1}t)^{1/r}$ is contained in $k((z-q))((t))=F_{\wp(P)}$ and that $y$ is algebraic over $F_P=k((z-q,t))$ of degree $r$, hence $F_P(y)/F_P$ is cyclic of degree $r$. It was furthermore shown in \cite[Prop. 3.7]{HHM} that $R_P:=F_P[y] \subseteq F_{\wp(P)}$ is a (non-parameterized) Picard-Vessiot ring for a matrix $A \in F_P^{n\times n}$ over $F_P$ with fundamental solution matrix $Y \in \GL_n(F_{\wp(P)})$ such that the non-parameterized differential Galois group
 $\underline{\operatorname{Gal}}_Y^\del(R_P/F_P)$ equals $\Hcal$ as subgroup of $\GL_n$ (here we use that $\Hcal \leq \GL_n$ is Zariski-closed, since it is finite). Now $R_P=F_P[y]$ is a finite field extension of $F_P$, hence $\del_{t_0}$ extends uniquely to $R_P$ and we conclude that $R_P$ is also a $\PPV$-ring for $A$ over $F_P$. Fix a differentially closed field $\hat K \supseteq k((t))$. As $R_P/K$ is a regular extension, $R\otimes_K \hat K$ is an integral domain and $\Frac(R\otimes_K \hat K)$ is a finite field extension of $\Frac(F\otimes_K \hat K)$. Hence every automorphism of $\Frac(R\otimes_K \hat K)/\Frac(F\otimes_K \hat K)$ is a $\del\del_{t_0}$-automorphism and thus $\Aut^{\del\del_{t_0}}(R\otimes_K \hat K/F\otimes_K \hat K)=\Aut^\del(R\otimes_K \hat K/F\otimes_K \hat K)$. We conclude that the $\PPV$-group $\GalN_Y(R_P/F_P)$ and the non-parameterized differential Galois group $\underline{\operatorname{Gal}}_Y^\del(R_P/F_P)$ have the same set of $\hat K$-rational points and the claim follows.

b) By Proposition \ref{Tannaka}, we may assume $\Hcal=H$, where we consider $H$ as a subgroup of $\GL_2$ in its representation $\begin{pmatrix}
                                                                                                                               1& * \\ 0& 1
                                                                                                                              \end{pmatrix}$. As explained in Example \ref{ex Kolchin Abschluss}, 
\[H=\{\begin{pmatrix}
             1 & x \\ 0 & 1 
            \end{pmatrix}
 \ | \ h\del_{t_0}(x)-\del_{t_0}(h)x=0 \}=\{\begin{pmatrix}
             1 & x \\ 0 & 1 
            \end{pmatrix}
 \ | \ \del_{t_0}(x/h)=0 \}.\] 

Consider \[f:=\sum \limits_{n=1}^{\infty} \frac{(-1)^{n+1}}{n(z-q)^n}t^n \in k((z-q))((t))=F_{\wp(P)}. \] Recall that $\del(z)=1/t$, $\del_{t_0}(z)=-zt^{-e}/e$, $\del_{t_0}(t)=t^{1-e}/e$ and that we are using the canonical extension of $\del$ from $F$ to $k((z-q))((t))$ and the extension of $\del_{t_0}$ from $F$ to $k((z-q))((t))$ as defined in Lemma \ref{defdelN}. We compute 
\[\del(f)=\frac{1}{t}\sum \limits_{n=1}^{\infty} \frac{(-1)^{n}}{(z-q)^{n+1}}t^n=\frac{-1}{(z-q)^2} \sum \limits_{n=0}^\infty \left(\frac{-t}{z-q}\right)^n=\frac{-1}{(z-q)^2+t(z-q)}, \]
hence $\del(f) \in k((z-q,t))=F_P$. Next, we compute
\[\del_{t_0}(f)=\frac{1}{e}\sum \limits_{n=1}^{\infty} \frac{(-1)^{n+1}}{(z-q)^{n}}t^{n-e}-\frac{z}{e}\sum \limits_{n=1}^{\infty} \frac{(-1)^{n}}{(z-q)^{n+1}}t^{n-e} \] and conclude that also $\del_{t_0}(f)\in  F_P$. Set $y=h\cdot f$ and $Y=\begin{pmatrix}
1 & y \\
0& 1                                                                                                                                                                                                                                             
\end{pmatrix}\in \GL_2(F_{\wp(P)})$. Then $\del(y)=h\del(f) \in F_P$ and $\del_{t_0}(y/h) \in F_P$. This implies that $R_P:=F_P\{Y,Y^{-1}\}_{\del_{t_0}}=F_P\{y\}_{\del_{t_0}}=F_P[f]\subseteq F_{\wp(P)}$ is a $\PPV$-ring over $F_P$ for $A=\begin{pmatrix}
     0 & \del(y)   \\                                                                                                                                                                                                               
     0 & 0                                                                                                                                                                                                                     \end{pmatrix}$. Applying Lemma \ref{AequivalenzUG}.b with $L=h\cdot\del_{t_0}-\del_{t_0}(h)\cdot \del_{t_0}^0$ implies $\GalN_{Y}(R_P/F_P)\leq H$. 
 It is easy to see that $f \notin F_P$ (for a proof, see the proof of Thm. 4.2. in \cite{param_LAG}) and thus $y \notin F_P$, hence $\GalN_{Y}(R_P/F_P)$ is non-trivial by the Galois correspondence (\cite[Proposition 8.5]{Gilletetc}). As $H$ does not have non-trivial subgroups (see Example \ref{ex Kolchin Abschluss}), we conclude $\GalN_{Y}(R_P/F_P)=H$.

c) By Proposition \ref{Tannaka}, we may assume $\Hcal=\Gm^\bigtriangleup$. Consider $y=\exp(\frac{t}{z-q}) \in F_{\wp(P)}=k((z-q))((t))$. Then $\frac{\del(y)}{y}=\del(\frac{t}{z-q}) \in F$, hence $R_P:=F_P\{y,y^{-1}\}_{\del_{t_0}}\subseteq F_{\wp(P)}$ is a $\PPV$-ring over $F_P$. As $\frac{\del_{t_0}(y)}{y}=\del_{t_0}(\frac{t}{z-q}) \in F_P$, $\GalN_y(R_P/F_P)\leq \Gm^{\bigtriangleup}$ by Lemma \ref{AequivalenzUG} (applied with $L=\del_{t_0}^0$). Note that every strict differential algebraic subgroup of $\Gm^{\bigtriangleup}$ is finite. It was shown in the proof of Lemma 3.5 in \cite{HHM} that $y$ is not algebraic over $F_P$. Thus every $c \in \Gm(k)$ defines an $F_P$-linear $\del\del_t$-automorphism of $R_P=F_P[y,y^{-1}]$ with $y \mapsto yc$. Hence ${\Aut}^{\del\del_{t_0}}(R_P/F_P)$ is not finite and thus $\GalN_y(R_P/F_P)$ is not finite and we conclude $\GalN_y(R_P/F_P)= \Gm^{\bigtriangleup}$ .
\end{proof}

\begin{thm}\label{result}
We consider $k((t))$ as a $\del_t$-differential field with $\del_t=\del/\del t$ and $k((t))(x)$ as a $\del\del_t$-field with $\del=\del/\del x$ and $\del_t=\del/ \del t$. Let $\G \leq \GL_n$ be a linear differential algebraic group defined over $k((t))$ and let $\overline{k((t))}$ be an algebraic closure of $k((t))$. Let $\G_1,\dots,\G_r$ be finitely many Kolchin-closed subgroups of $\G$ defined over $\overline{k((t))}$ such that for each $i$, either
\begin{enumerate}
 \item $\G_i$ is finite, or
 \item $\G_i=\overline{\left \langle g \right \rangle}^K$ for some $g \in \GL_n(\overline{k((t))})$ such that $\G_i$ is $\overline{k((t))}$-isomorphic to a Kolchin-closed subgroup of $\Ga$, or
 \item $\G_i$ is $\overline{k((t))}$-isomorphic to the constant subgroup $\Gm^{\bigtriangleup}$ of $\Gm$.
\end{enumerate}
Then if $\G_1,\dots,\G_r$ generate a Kolchin-dense subgroup of $\G_{\overline{k((t))}}$, there exists a $\PPV$-ring $R/k((t))(x)$ with $\PPV$-group $\G$. 
\end{thm}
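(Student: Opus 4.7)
The plan is to apply Criterion \ref{crit} (equivalently Theorem \ref{criterion}); the real work lies in massaging the given $\G_i$ into a form where the local realization in Lemma \ref{building blocks} applies at every finite closed $k$-point. A preliminary reduction handles type (a): since Lemma \ref{building blocks}.a explicitly requires $\Hcal \cong \Z/r\Z$, I first replace each finite $\G_i$ by the finite collection of cyclic subgroups $\{\overline{\left\langle g \right\rangle}^K : g \in \G_i(\overline{k((t))})\}$. These are still Kolchin-closed (being finite), each is cyclic of some finite order $r_{i,g}$, and their union generates $\G_i$ as an abstract group, so the enlarged (still finite) list continues to Kolchin-densely generate $\G_{\overline{k((t))}}$; from now on every subgroup of type (a) is cyclic.

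Next, following Steps (1)--(3) of Criterion \ref{crit}, I would rename $k_0 := k$, $t_0 := t$, $\del_{t_0} := \del_t$ and enlarge the base field so as to define all relevant data at once. These data consist of the coefficients defining each $\G_i$; for type (b), the element $g \in \GL_n(\overline{k_0((t_0))})$ together with its image $h \in \Ga(\overline{k_0((t_0))})$ under the $\overline{k_0((t_0))}$-isomorphism $\G_i \cong \overline{\left\langle h \right\rangle}^K$; for type (c), the $\overline{k_0((t_0))}$-isomorphism $\G_i \cong \Gm^{\bigtriangleup}$; and the primitive $r_{i,g}$-th roots of unity for the cyclic subgroups together with the $e$-th root of unity needed for the descent. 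All of these live in a finite extension of $k_0((t_0))$, so by \cite[Lemma 3.4]{HHM} I may take this extension in the form $K = k((t))$ with $k/k_0$ finite Galois, $t$ an $e$-th root of $t_0$, and $k$ containing a primitive $e$-th root of unity. After this enlargement every $\G_i$ is defined over $k((t))$, each isomorphism to the corresponding standard model is $k((t))$-rational, and $k$ contains all the required roots of unity. Setting $z = x/t$ puts us in the setup of Notation \ref{not}.

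It then remains to verify Step (4) of the criterion: for each closed finite $k$-point $P$ on the $z$-line and each $i$, Lemma \ref{building blocks} supplies a $\PPV$-ring $R_{P,i} \subseteq F_{\wp(P)}$ over $F_P$ with $\PPV$-group $\G_i$---part (a) for the cyclic type-(a) subgroups (using that $k$ contains the appropriate roots of unity), part (b) for the type-(b) subgroups (using that $h \in k((t))$ after the enlargement), and part (c) for the type-(c) subgroups. Theorem \ref{criterion} then delivers a $\PPV$-ring over $k_0((t_0))(x) = k((t))(x)$ (after reverting the renaming) with $\PPV$-group $\G$, which is the desired conclusion.

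The only place where something could really go wrong is the simultaneous field enlargement: one must arrange a \emph{single} extension $K = k((t))$ of the special form required by \cite[Lemma 3.4]{HHM} that at once contains all coefficients defining the $\G_i$, all isomorphisms to the standard models of Lemma \ref{building blocks}, and all the needed roots of unity. Because only finitely many pieces of data are involved this causes no genuine difficulty, but it is the step that links the flexible algebraic hypothesis on $\G$ to the rigid local construction supplied by the lemma.
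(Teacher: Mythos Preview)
Your proposal is correct and follows essentially the same route as the paper's proof: reduce finite $\G_i$'s to cyclic ones, rename $k_0:=k$, $t_0:=t$, pass to a single finite extension $K=k((t))$ of the form in \cite[Lemma~3.4]{HHM} over which all the $\G_i$, the relevant isomorphisms, and the required roots of unity are defined, and then invoke Lemma~\ref{building blocks} together with Theorem~\ref{criterion}. The only cosmetic slip is the equation ``$k_0((t_0))(x) = k((t))(x)$'' in your last paragraph, which is literally false in the new notation; but your parenthetical ``after reverting the renaming'' shows you have the right field in mind.
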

\begin{proof}
For the sake of consistency of the notation with the notation in the previous section, we rename $k_0:=k$ and $t_0:=t$. First note that for all $i \leq r$ with $\G_i$ finite, we may assume that $\G_i$ is finite and cyclic (by replacing $\G_i$ with a couple of subgroups). We can fix a finite extension $K/k_0((t_0))$ such that all elements $g \in \GL_n(\overline{k_0((t_0))})$ mentioned in b) are contained in $\GL_n(K)$, such that all $\overline{k_0((t_0))}$-isomorphisms mentioned in b) and c) are defined over $K$, and furthermore such that $K$ contains a primitive $|\G_i|$-th root of unity for every $i$ with $\G_i$ finite. After enlarging $K$ if necessary, we may assume $K=k((t))$ for a finite Galois extension $k/k_0$ and an $e$-th root $t$ of $t_0$ ($e\geq 1$), see \cite[Lemma 3.4]{HHM}. We may also assume that $k$ contains a primitive $e$-th root of unity. Thus Notation \ref{not} applies to our situation and the claim follows from Theorem \ref{criterion} together with Lemma \ref{building blocks}.
\end{proof}

As an application of Theorem \ref{result}, we obtain that linear differential algebraic groups that are differentially generated by finitely many unipotent elements are $\PPV$-groups over $k((t))(x)$. 

\begin{thm}\label{result_unipotent}
We consider $k((t))$ as a $\del_t$-differential field with $\del_t=\del/\del t$ and $k((t))(x)$ as a $\del\del_t$-field with $\del=\del/\del x$ and $\del_t=\del/ \del t$.
Let $\G \leq \GL_n$ be a linear differential algebraic group defined over $k((t))$ such that $\G=\overline{\left \langle g_1,\dots,g_r\right \rangle}^K$ for some unipotent elements $g_i \in \GL_n(\overline{k((t))})$. Then there exists a $\PPV$-ring $R/k((t))(x)$ with $\PPV$-group $\G$. 
\end{thm}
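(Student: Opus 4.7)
The plan is to apply Theorem \ref{result} to the subgroups $\G_i := \overline{\langle g_i \rangle}^K$ of $\G_{\overline{k((t))}}$ for $i=1,\dots,r$. Each $g_i$ lies in $\G(\overline{k((t))})$ and $\G$ is Kolchin-closed, so each $\G_i$ is a Kolchin-closed subgroup of $\G_{\overline{k((t))}}$. The Kolchin-density hypothesis of Theorem \ref{result} is built into the statement: since each $g_i \in \G_i(\overline{k((t))})$ and by assumption $\G = \overline{\langle g_1,\dots,g_r \rangle}^K$, the groups $\G_1,\dots,\G_r$ Kolchin-generate $\G_{\overline{k((t))}}$.

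It therefore suffices to verify that each $\G_i$ satisfies one of conditions (a), (b), (c) of Theorem \ref{result}. If $g_i = I_n$, then $\G_i$ is trivial, hence finite, and condition (a) is satisfied. Otherwise, the key observation is that in characteristic zero each unipotent element sits canonically inside a one-parameter additive subgroup. Indeed, since $g_i - I_n$ is nilpotent, the series $N_i := \sum_{k=1}^{n-1} \frac{(-1)^{k+1}}{k}(g_i - I_n)^k$ is a finite sum giving a nonzero nilpotent matrix over $\overline{k((t))}$, and the exponential $\varphi \colon \Ga \hookrightarrow \GL_n$, $s \mapsto \exp(sN_i)$, is a closed immersion of algebraic groups defined over $\overline{k((t))}$ whose image $H_i$ contains $g_i = \varphi(1)$. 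Since $\Z$ is Zariski-dense in $\Ga$, the image $H_i$ is precisely the Zariski closure of $\langle g_i \rangle$ inside $\GL_n$. Every Zariski-closed subset is a fortiori Kolchin-closed, so the Kolchin closure $\G_i = \overline{\langle g_i \rangle}^K$ is contained in $H_i$. Hence $\varphi^{-1}$ identifies $\G_i$ with a Kolchin-closed subgroup of $\Ga$ over $\overline{k((t))}$, which verifies condition (b) with the generator $g = g_i$.

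With all hypotheses of Theorem \ref{result} in hand, that theorem directly produces a $\PPV$-ring $R/k((t))(x)$ whose $\PPV$-group is $\G$. I do not foresee any genuine obstacle: the whole proof reduces to the elementary fact that over a field of characteristic zero, a single unipotent matrix generates, in the Zariski sense, a one-parameter unipotent subgroup isomorphic to $\Ga$, together with the trivial remark that the Kolchin topology refines the Zariski topology on $\GL_n$. The heavy lifting has already been done in Theorem \ref{result} (and ultimately in Theorem \ref{criterion} and Lemma \ref{building blocks}).
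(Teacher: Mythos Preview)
Your proof is correct and follows essentially the same route as the paper: reduce to Theorem \ref{result} by showing each $\overline{\langle g_i\rangle}^K$ is of type (b), which amounts to checking that the Zariski closure of a single unipotent element is a copy of $\Ga$. The only cosmetic difference is that you write down the one-parameter subgroup explicitly via $s\mapsto\exp(s\log g_i)$, whereas the paper invokes the structure theorem for commutative unipotent groups (\cite[Thm.~3.4.7(c)]{Springer}) to get $\Hcal\cong\Ga^k$ and then argues $k\le 1$; your argument is slightly more self-contained, but the content is the same.
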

\begin{proof}
 It suffices to show that each $\overline{\left \langle g_i \right \rangle}^K$ is of type b) as in Theorem \ref{result}. Let $g \in \GL_n(\overline{k((t))})$ be unipotent. We claim that $\overline{\left \langle g \right \rangle}^K$ is $\overline{k((t))}$-isomorphic to a Kolchin-closed subgroup of $\Ga$. Let $\Hcal \leq \GL_n$ denote the Zariski-closure of $\left \langle g \right \rangle$. Then $\Hcal$ is a commutative, unipotent linear algebraic group defined over $\overline{k((t))}$. Hence $\Hcal$ is  $\overline{k((t))}$-isomorphic as an algebraic group to $\Ga^k$ for some $k\geq 1$ by \cite[Theorem 3.4.7(c)]{Springer}. However, as $\Hcal$ contains a Zariski-dense subgroup generated by one element, we conclude $k\leq 1$. It follows that $\overline{\left \langle g \right \rangle}^K\subseteq \Hcal$ is $\overline{k((t))}$-isomorphic as a differential algebraic group to a Kolchin-closed subgroup of $\Ga$.
\end{proof}

As a corollary, we can generalize the result of \cite{param_LAG} from $k((t))$-split semisimple connected linear algebraic groups to arbitrary semisimple connected linear algebraic groups:

\begin{cor}\label{result_semisimple}
We consider $k((t))$ as a $\del_t$-differential field with $\del_t=\del/\del t$ and $k((t))(x)$ as a $\del\del_t$-field with $\del=\del/\del x$ and $\del_t=\del/ \del t$.
Let $\G \leq \GL_n$ be a semisimple connected linear algebraic group defined over $k((t))$. Then there exists a $\PPV$-ring over $k((t))(x)$ with $\PPV$-group $\G$. 
\end{cor}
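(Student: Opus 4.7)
The plan is to reduce the corollary to Theorem~\ref{result_unipotent}, by which it suffices to produce finitely many unipotent elements $g_1,\dots,g_r \in \G(\overline{k((t))})$ whose Kolchin closure in $\G_{\overline{k((t))}}$ equals $\G_{\overline{k((t))}}$. Since $\G$ is connected semisimple and $\overline{k((t))}$ is algebraically closed, $\G_{\overline{k((t))}}$ is a split connected semisimple algebraic group. Fix a split maximal torus, let $\Phi$ be its root system, and let $U_\alpha\cong \Ga$ be the root subgroups (each consisting entirely of unipotent elements). For every root $\alpha\in\Phi^+$, choose the unipotent elements $u_\alpha(1), u_\alpha(t)\in U_\alpha(\overline{k((t))})$, and similarly $u_{-\alpha}(1), u_{-\alpha}(1/t)\in U_{-\alpha}(\overline{k((t))})$. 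This yields a finite set of unipotent elements in $\G(\overline{k((t))})$.

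For each positive root $\alpha$, the rank-one subgroup $\G^\alpha:=\langle U_\alpha,U_{-\alpha}\rangle \subseteq \G_{\overline{k((t))}}$ is isomorphic to $\SL_2$ or $\mathrm{PGL}_2$, and the argument of Proposition~3.1 in~\cite{param_LAG} (whose $\SL_2$-case is recalled in Example~\ref{ex SL2}) shows that the Kolchin closure of $\{u_\alpha(1), u_\alpha(t), u_{-\alpha}(1), u_{-\alpha}(1/t)\}$, computed inside $\G^\alpha$, coincides with $\G^\alpha$. Since $\G^\alpha$ is an algebraic subgroup of $\G_{\overline{k((t))}}$ and algebraic subgroups are automatically Kolchin closed, the Kolchin closure of our finite unipotent collection inside $\G_{\overline{k((t))}}$ contains every $\G^\alpha$. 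By the structure theory of split semisimple groups, the subgroups $\G^\alpha$ (ranging over $\alpha\in\Phi^+$) generate $\G_{\overline{k((t))}}$ as an algebraic group (they contain all root subgroups $U_{\pm\alpha}$ together with the one-parameter subgroups $T_\alpha\subseteq T$ that generate the maximal torus). Hence the Kolchin closure of our set is all of $\G_{\overline{k((t))}}$, and Theorem~\ref{result_unipotent} yields the corollary.

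The main obstacle is the Kolchin density step inside each rank-one subgroup: one must check that Proposition~3.1 of~\cite{param_LAG} extends from $k((t))$ to $\overline{k((t))}$ and applies uniformly to each root $\SL_2$ (or $\mathrm{PGL}_2$) subgroup of $\G_{\overline{k((t))}}$. Since the argument of loc.~cit.\ only uses the existence of a non-constant element $t$ with $\del_t(t)\neq 0$, which remains true over $\overline{k((t))}$, the extension is essentially verbatim; only the bookkeeping across all roots of $\Phi$ needs care.
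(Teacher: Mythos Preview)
Your proposal is correct and follows essentially the same approach as the paper: both reduce to Theorem~\ref{result_unipotent} by exhibiting finitely many unipotent elements in the root subgroups of $\G_{\overline{k((t))}}$ and invoking Proposition~3.1 of \cite{param_LAG} for Kolchin density. The paper chooses $u_i(1), u_i(t), u_i(-t^{-1})$ in each root subgroup and cites Proposition~3.1 directly for the full group, whereas you argue rank-one subgroup by rank-one subgroup and then use that the $\G^\alpha$ generate $\G$; this is a harmless reorganization of the same idea.
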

\begin{proof}
Let $U_1,\dots,U_m$ be the finitely many root subgroups of $\G$ (defined over $\overline{k((t))}$) and fix $\overline{k((t))}$-isomorphisms of linear algebraic groups 
$u_i\colon \Ga \to U_i$. For $1 \leq i \leq m$, we define unipotent elements of $\G(\overline{k((t))})$  
\[h_i=u_i(1) \text{, } g_i=u_i(t) \text{ and } \tilde g_i=u_i( -t^{-1}).\] By Proposition 3.1 in \cite{param_LAG}, \
\[\G_{\overline{k((t))}}=\overline{\left \langle h_1,g_1,\tilde g_1,\dots,h_m, g_m, \tilde g_m\right \rangle}^K.\] Hence the claim follows from Theorem \ref{result_unipotent}.
\end{proof}

\bibliographystyle{alpha}	
 \bibliography{references_paramII}
\end{document}